\newtheorem{theorem}{Theorem}
\newtheorem{definition}[theorem]{Definition}
\newtheorem{lemma}[theorem]{Lemma}
\newenvironment{proof}[1][Proof]{\noindent\textbf{#1.} }{\ \rule{0.5em}{0.5em}}
\begin{document}

\title{Stochastic maximum principle for optimal control problem of backward systems
with terminal condition in $L^{1}\thanks{This work is partially supported by
Algerian-French cooperation, Tassili 07 MDU 705.}$}
\author{Seid Bahlali\\{\small Laboratory of Applied Mathematics,}\\{\small University Med Khider, Po. Box 145}\\{\small Biskra 07000, Algeria.}\\{\small sbahlali@yahoo.fr}}
\maketitle

\begin{abstract}
We consider a stochastic control problem, where the control domain is convex
and the system is governed by a nonlinear backward stochastic differential
equation.\ With a $L^{1}$\textbf{\ }terminal data, we derive necessary
optimality conditions in the form of stochastic maximum principle.

\ 

\textbf{AMS Subject Classification}\textit{. }93Exx

\ 

\textbf{Keywords}\textit{. }Backward stochastic differential
equation,\textit{\ }Stochastic maximum principle, Optimal control, Adjoint
equation, Variational inequality.

\end{abstract}

\section{Introduction}

We consider a stochastic control problem where the control domain is convex
and the system is governed by a backward stochastic differential equation
(BSDE\ for\ short) of the type%
\[
\left\{
\begin{array}
[c]{l}%
dy_{t}^{v}=b\left(  t,y_{t}^{v},z_{t}^{v},v_{t}\right)  dt+z_{t}^{v}dW_{t},\\
y_{T}^{v}=\xi,
\end{array}
\right.
\]
where $W=\left(  W_{t}\right)  _{t\geq0}$ is a standard Brownian motion,
defined on a filtered probability space $\left(  \Omega,\mathcal{F},\left(
\mathcal{F}_{t}\right)  _{t\geq0},\mathcal{P}\right)  ,$ satisfying the usual
conditions. The control variable $v$ is an $\mathcal{F}_{t}$-adapted process
with values in a convex closed subset $U$ of $\mathbb{R}^{m}$. The terminal
condition $\xi$ is a $n$-dimensional $\mathcal{F}_{T}$-measurable random
vector such that $\mathbb{E}\left\vert \xi\right\vert <\infty.$

The objective of the control problem, is to choose $u$ in such a way as to
minimize a functional cost of the type%
\[
J\left(  v\right)  =\mathbb{E}\left[  g\left(  y_{0}^{v}\right)  +%
%TCIMACRO{\dint \nolimits_{0}^{T}}%
%BeginExpansion
{\displaystyle\int\nolimits_{0}^{T}}
%EndExpansion
h\left(  t,y_{t}^{v},z_{t}^{v},v_{t}\right)  dt\right]  .
\]

A control process that solves this problem is called optimal.

Stochastic control problems for the backward and forward-backward systems have
been studied by many authors including Peng $\left[  21\right]  $, Xu $\left[
24\right]  $, El-Karoui et al $\left[  12\right]  $,\ Wu $\left[  23\right]
$, Dokuchaev and Zhou $\left[  9\right]  $, Peng and Wu $\left[  22\right]  ,$
Bahlali and Labed $\left[  1\right]  $, Bahlali $\left[  2,3\right]  $.
Approachs based on dynamic programming have been studied by Fuhrman and
Tessetore $\left[  14\right]  $. All this papers consider BSDEs with $L^{p}$
terminal condition, $p\geq2$.

The aim of the present paper is to derive necessary optimality conditions, in
the form of stochastic maximum principle. The terminal condition is assumed in
$L^{1}$. This is the first version which covers the control of backward
systems in $L^{1}$. Our result extend all the previous works in the subject.

Since the control domain is convex, a classical way of treating such a problem
consists to use the convex perturbation method. More precisely, if $u $ is an
optimal control and $v$ is arbitrary, we define, for each $t\in\left[
0,T\right]  $, a perturbed control as follows%
\[
u^{\theta}=u+\theta\left(  v-u\right)  .
\]

With a sufficiently small $\theta>0$, we derive the variational equation from
the fact that
\[
0\leq J\left(  u^{\theta}\right)  -J\left(  u\right)  .
\]

The paper is organized as follows. In Section 2, we formulate the problem and
give the various assumptions used throughout the paper. Section 3 is devoted
to some preliminary results, which will be used in the sequel. In the last
Section, we derive our main result, the necessary optimality conditions.

\ 

Along this paper, we denote by $C$ some positive constant and for simplicity,
we need the following matrix notation. We denote by $\mathcal{M}_{n\times
d}\left(  \mathbb{R}\right)  $ the space of $n\times d$ real matrix and
$\mathcal{M}_{n\times n}^{d}\left(  \mathbb{R}\right)  $ the linear space of
vectors $M=\left(  M_{1},...,M_{d}\right)  $ where $M_{i}\in\mathcal{M}%
_{n\times n}\left(  \mathbb{R}\right)  $.

For any $M,N\in\mathcal{M}_{n\times n}^{d}\left(  \mathbb{R}\right)  $,
$L,S\in\mathcal{M}_{n\times d}\left(  \mathbb{R}\right)  $, $\alpha,\beta
\in\mathbb{R}^{n}$ and $\gamma\in\mathbb{R}^{d},$ we use the following notations

$\alpha\beta=%
%TCIMACRO{\dsum \limits_{i=1}^{n}}%
%BeginExpansion
{\displaystyle\sum\limits_{i=1}^{n}}
%EndExpansion
\alpha_{i}\beta_{i}\in\mathbb{R}$ is the product scalar in $\mathbb{R}^{n}$,

$LS=%
%TCIMACRO{\dsum \limits_{i=1}^{d}}%
%BeginExpansion
{\displaystyle\sum\limits_{i=1}^{d}}
%EndExpansion
L_{i}S_{i}\in\mathbb{R}$, where $L_{i}$ and\ $S_{i}$ are the $i^{th}$ columns
of $L$ and $S,$

$ML=%
%TCIMACRO{\dsum \limits_{i=1}^{d}}%
%BeginExpansion
{\displaystyle\sum\limits_{i=1}^{d}}
%EndExpansion
M_{i}L_{i}\in\mathbb{R}^{n}$,

$M\alpha\gamma=\sum\limits_{i=1}^{d}\left(  M_{i}\alpha\right)  \gamma_{i}%
\in\mathbb{R}^{n}$,

$MN=%
%TCIMACRO{\dsum \limits_{i=1}^{d}}%
%BeginExpansion
{\displaystyle\sum\limits_{i=1}^{d}}
%EndExpansion
M_{i}N_{i}\in\mathcal{M}_{n\times n}\left(  \mathbb{R}\right)  $,

$MLN=%
%TCIMACRO{\dsum \limits_{i=1}^{d}}%
%BeginExpansion
{\displaystyle\sum\limits_{i=1}^{d}}
%EndExpansion
M_{i}LN_{i}\in\mathcal{M}_{n\times n}\left(  \mathbb{R}\right)  $,

$ML\gamma=%
%TCIMACRO{\dsum \limits_{i=1}^{d}}%
%BeginExpansion
{\displaystyle\sum\limits_{i=1}^{d}}
%EndExpansion
M_{i}L\gamma_{i}\in\mathcal{M}_{n\times n}\left(  \mathbb{R}\right)  $.

We denote by $L^{\ast}$ the transpose of the matrix $L$ and $M^{\ast}=\left(
M_{1}^{\ast},...,M_{d}^{\ast}\right)  $.

\section{Formulation of the problem}

Let $T$ be a fixed strictly positive real number and $\left(  \Omega
,\mathcal{F},\left(  \mathcal{F}_{t}\right)  _{t\in\left[  0,T\right]
},\mathcal{P}\right)  $ be a filtered probability space satisfying the usual
conditions, on which a $d$-dimensional Brownian motion $W=\left(
W_{t}\right)  _{t\in\left[  0,T\right]  }$\ is defined. We assume that
$\left(  \mathcal{F}_{t}\right)  _{t\in\left[  0,T\right]  }$ is the
$\mathcal{P}$- augmentation of the natural filtration of $\left(
W_{t}\right)  _{t\in\left[  0,T\right]  }$.

\begin{definition}
Let $U$ be a closed convex subset of $\mathbb{R}^{m}$. \textit{An admissible
control }$v$ \textit{is an }$\mathcal{F}_{t}$-\textit{adapted process with
values in }$U$\textit{\ such that }%
\[
\underset{t\in\left[  0,T\right]  }{\sup}\mathbb{E}\left\vert v_{t}\right\vert
^{2}<\infty.
\]

\textit{We denote by }$\mathcal{U}$\textit{\ the set of all admissible
controls.}
\end{definition}

For any $v\in\mathcal{U}$, we consider the following controlled BSDE
\begin{equation}
\left\{
\begin{array}
[c]{l}%
dy_{t}^{v}=b\left(  t,y_{t}^{v},z_{t}^{v},v_{t}\right)  dt+z_{t}^{v}dW_{t},\\
y_{T}^{v}=\xi,
\end{array}
\right.
\end{equation}
where $b:\left[  0,T\right]  \times\mathbb{R}^{n}\times\mathcal{M}_{n\times
d}\left(  \mathbb{R}\right)  \times U\longrightarrow\mathbb{R}^{n}$ and $\xi$
is an $n$-dimensional $\mathcal{F}_{T}$-\ measurable random vector such that
$\mathbb{E}\left\vert \xi\right\vert <\infty.$

The aim of the control problem is to minimize, over the class $\mathcal{U}$ of
admissible controls, a functional cost of the form
\begin{equation}
J\left(  v\right)  =\mathbb{E}\left[  g\left(  y_{0}^{v}\right)  +%
%TCIMACRO{\dint \nolimits_{0}^{T}}%
%BeginExpansion
{\displaystyle\int\nolimits_{0}^{T}}
%EndExpansion
h\left(  t,y_{t}^{v},z_{t}^{v},v_{t}\right)  dt\right]  ,
\end{equation}
where $g:\mathbb{R}^{n}\longrightarrow\mathbb{R}$ and $h:\left[  0,T\right]
\times\mathbb{R}^{n}\times\mathcal{M}_{n\times d}\left(  \mathbb{R}\right)
\times U\longrightarrow\mathbb{R}$.

\ 

A control $u\in\mathcal{U}$ is called optimal, if that solves the problem
\begin{equation}
J(u)=\inf\limits_{v\in\mathcal{U}}J(v).
\end{equation}

Our goal in this paper is to establish necessary optimality conditions, in the
form of stochastic maximum principle.

\ 

To study this kind of problem, we need reasonable conditions which ensure the
existence and uniqueness of solutions of BSDEs with $L^{1}$ terminal
condition. This is given by the results of Briand et al $\left[
5,\ \text{page 124-128}\right]  $.

Miming $\left[  5\right]  $, we use the following notations.

Let us denote by $\sum_{T}$ the set of all stopping times $\tau$ such that
$\tau\leq T$. A process $Y=\left(  Y_{t}\right)  _{t\in\left[  0,T\right]  }$
belongs to class $\left(  D\right)  ,$ if the family $\left\{  Y_{\tau}%
,\ \tau\in\sum_{T}\right\}  $ is uniformly integrable.

For a process $Y$ in class $\left(  D\right)  $, we put%
\[
\left\Vert Y\right\Vert _{1}=\sup\left\{  \mathbb{E}\left\vert Y_{\tau
}\right\vert ,\ \tau\in\sum_{T}\right\}  .
\]

The space of progressively measurable continuous processes which belong to
class $\left(  D\right)  $ is complete under this norm, see Dellacherie and
Meyer $\left[  7,\ \text{page 90}\right]  .$

For any real $p>0$, $S^{p}=S^{p}\left(  \mathbb{R}^{n}\right)  $ denotes the
set of $\mathbb{R}^{n}$-valued, adapted cadlag processes $\left\{
X_{t}\right\}  _{t\in\left[  0,T\right]  }$ such that
\[
\left\Vert X\right\Vert _{S^{p}}=\mathbb{E}\left[  \underset{t}{\sup
}\left\vert X_{t}\right\vert ^{p}\right]  ^{1\wedge1/p}<+\infty.
\]

If $p\geq1$, $\left\Vert .\right\Vert _{S^{p}}$ is a norm on $S^{p}$ and if
$p\in\left(  0,1\right)  $, $\left(  X,X^{^{\prime}}\right)  \longmapsto
\left\Vert X-X^{^{\prime}}\right\Vert _{S^{p}}$ defines a distance on $S^{p}
$. Under this metric, $S^{p}$ is complete.

$M^{p}=M^{p}\left(  \mathbb{R}^{n}\right)  $ denotes the set of (equivalent
classes of) predictable processes $\left\{  X_{t}\right\}  _{t\in\left[
0,T\right]  }$ with values in $\mathbb{R}^{n}$ such that
\[
\left\Vert X\right\Vert _{M^{p}}=\mathbb{E}\left[  \left(
%TCIMACRO{\dint \nolimits_{0}^{T}}%
%BeginExpansion
{\displaystyle\int\nolimits_{0}^{T}}
%EndExpansion
\left\vert X_{t}\right\vert ^{2}dt\right)  ^{p/2}\right]  ^{1\wedge
1/p}<+\infty.
\]

For $p\geq1$, $M^{p}$ is a Banach space endowed with this norm and for
$p\in\left(  0,1\right)  $, $M^{p}$ is a complete metric space with the
resulting distance.

\ 

We assume,%
\begin{equation}%
\begin{array}
[c]{l}%
\text{(4.1) }b,g,h\ \text{are continuously differentiable with respect to
}\left(  y,z,v\right)  \text{.}\\
\text{(4.2) The derivatives }b_{y},b_{z},b_{v},h_{y},h_{z},h_{v}\text{ and
}g_{y}\text{ are continuous }\\
\ \ \ \ \ \text{in }\left(  y,z,v\right)  \text{ and uniformly bounded.}\\
\text{(4.3) }g\text{\ is bounded by }C\left(  1+\left\vert y\right\vert
\right)  .\\
\text{(4.4) }\forall r>0,\text{ we have (for }f=b,h\text{)}\\
\ \ \ \ \ \phi_{r}\left(  t\right)  :=\underset{\left\vert y\right\vert \leq
r}{\sup}\left\vert f\left(  t,y,0,v\right)  -f\left(  t,0,0,v\right)
\right\vert \in L^{1}\left(  \left[  0,T\right]  \times\Omega,m\otimes
\mathcal{P}\right)  .\\
\text{(4.5) There exists two constants }C\geq0,\ \alpha\in\left(  0,1\right)
\text{ and a non-negative }\\
\ \ \ \ \ \text{progressively measurable processes }\left\{  \varphi
_{t}\right\}  _{t\in\left[  0,T\right]  }\text{ and }\left\{  \psi
_{t}\right\}  _{t\in\left[  0,T\right]  }\text{ }\\
\ \ \ \ \ \text{such that }\forall\left(  t,y,z,v\right)  \in\left[
0,T\right]  \times\mathbb{R}^{n}\times\mathcal{M}_{n\times d}\left(
\mathbb{R}\right)  \times U,\\
\ \ \ \ \ \left\vert f\left(  t,y,z,v\right)  -f\left(  t,y,0,v\right)
\right\vert \leq C\left(  \varphi_{t}+\left\vert y\right\vert +\left\vert
z\right\vert +\left\vert v\right\vert \right)  ^{\alpha},\text{ for }f=b,h.\\
\ \ \ \ \ \mathbb{E}\left[  \left\vert \xi\right\vert +%
%TCIMACRO{\dint \nolimits_{0}^{T}}%
%BeginExpansion
{\displaystyle\int\nolimits_{0}^{T}}
%EndExpansion
\left(  \varphi_{t}+\psi_{t}\right)  dt\right]  <+\infty.\\
\text{(4.6) }\ \forall\left(  t,y,z_{1},v\right)  ,\left(  t,y,z_{2},v\right)
\in\left[  0,T\right]  \times\mathbb{R}^{n}\times\mathcal{M}_{n\times
d}\left(  \mathbb{R}\right)  \times U,\\
\ \ \ \ \ \left\vert f\left(  t,y,z_{1},v\right)  -f\left(  t,y,z_{2}%
,v\right)  \right\vert \leq C\left\vert z_{1}-z_{2}\right\vert ,\ \\
\ \ \ \ \ \text{for }f=b_{y},b_{z},b_{v},h_{y},h_{z},h_{v}.
\end{array}
\end{equation}

The above assumptions imply those of Briand et al $\left[  5\right]  $. Hence
from $\left[  5\text{, Th 6.2, p 125 and Th 6.3, p 126}\right]  $, for every
$v\in\mathcal{U}$, equation $\left(  1\right)  $\ admits a unique adapted solution.

We note that for the uniqueness, the solution $y$ belongs to the class
$\left(  D\right)  $ and $z$ belongs to the space $%
%TCIMACRO{\dbigcup \nolimits_{\beta>\alpha}}%
%BeginExpansion
{\displaystyle\bigcup\nolimits_{\beta>\alpha}}
%EndExpansion
M^{\beta}$, $\alpha\in\left(  0,1\right)  $. For the existence, the solution
$y$ belongs to the class $\left(  D\right)  $ and for each $\beta\in\left(
0,1\right)  $, $\left(  y,z\right)  $ belongs to the space $S^{\beta}\times
M^{\beta}$.

More details are given in Briand et al $\left[  5,\text{ page 124-128}\right]
$.

\ 

To enclose the formulation of the problem, it remains us to prove that the
cost $J$ is well defined. This is given by the following lemma.

\begin{lemma}
The functional cost $J$ is well defined from $\mathcal{U}$ into $\mathbb{R}$.
\end{lemma}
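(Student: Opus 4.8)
The plan is to show that for every admissible control $v\in\mathcal{U}$ the two terms $\mathbb{E}[g(y_0^v)]$ and $\mathbb{E}\!\left[\int_0^T h(t,y_t^v,z_t^v,v_t)\,dt\right]$ are finite real numbers. First I would recall from the discussion preceding the lemma (Briand et al.\ $[5]$, Th.\ 6.2--6.3) that $(1)$ admits a unique adapted solution with $y\in S^\beta$ and $z\in M^\beta$ for every $\beta\in(0,1)$, and in particular $y$ belongs to class $(D)$. For the first term I would use assumption (4.3): $|g(y_0^v)|\le C(1+|y_0^v|)$, so $\mathbb{E}|g(y_0^v)|\le C(1+\mathbb{E}|y_0^v|)\le C(1+\|y\|_{S^\beta})<\infty$ since $y\in S^\beta$ for some (any) $\beta\in(0,1)$; here $\mathbb{E}|y_0^v|\le\mathbb{E}[\sup_t|y_t^v|]$ when $\beta$ is taken $\ge 1$ is false, so instead I would take $\beta\in(0,1)$ and use $\mathbb{E}|y_0^v|\le(\mathbb{E}[\sup_t|y_t^v|^\beta])^{1/\beta}<\infty$, or more simply use that $y$ is in class $(D)$ so $\mathbb{E}|y_0^v|\le\|y\|_1<\infty$.

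For the running-cost term I would split $h$ using the structure hypotheses. Write
\[
|h(t,y_t^v,z_t^v,v_t)|\le |h(t,y_t^v,z_t^v,v_t)-h(t,y_t^v,0,v_t)|+|h(t,y_t^v,0,v_t)-h(t,0,0,v_t)|+|h(t,0,0,v_t)|.
\]
The first term is bounded by $C(\varphi_t+|y_t^v|+|z_t^v|+|v_t|)^\alpha$ by (4.5); the second by $\phi_r(t)$ on the event $\{|y_t^v|\le r\}$ by (4.4), but since $y_t^v$ is unbounded I would instead bound the second term directly by $|h_y|\cdot|y_t^v|\le C|y_t^v|$ using the uniform bound on $h_y$ from (4.2) and the mean value theorem; the third term I would bound by $|h(t,0,0,v_t)-h(t,0,0,0)|+|h(t,0,0,0)|\le C|v_t|+|h(t,0,0,0)|$, again by the uniform bound on $h_v$. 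Collecting, $|h(t,y_t^v,z_t^v,v_t)|\le C(\varphi_t+|y_t^v|+|z_t^v|+|v_t|)^\alpha+C|y_t^v|+C|v_t|+|h(t,0,0,0)|$.

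Now I would integrate in $t$ and take expectations. The term $\mathbb{E}\int_0^T|y_t^v|\,dt\le T\,\|y\|_1<\infty$ (class $(D)$), and $\mathbb{E}\int_0^T|v_t|\,dt\le T\sup_t(\mathbb{E}|v_t|^2)^{1/2}<\infty$ by the definition of $\mathcal{U}$; the constant term $\int_0^T|h(t,0,0,0)|\,dt$ is finite because $h(\cdot,0,0,0)$ is continuous on $[0,T]$, hence bounded (or, if one prefers, it is covered by (4.4) with $r=0$, giving $\phi_0\in L^1$, together with measurability of $h(t,0,0,0)$). The one genuinely delicate point — and the main obstacle — is the term $\mathbb{E}\int_0^T(\varphi_t+|y_t^v|+|z_t^v|+|v_t|)^\alpha\,dt$ with $\alpha\in(0,1)$: here I would use concavity of $x\mapsto x^\alpha$ and the elementary inequality $(a+b+c+d)^\alpha\le a^\alpha+b^\alpha+c^\alpha+d^\alpha$ to reduce to four separate terms, then apply Jensen's inequality (or Hölder with exponents $1/\alpha$ and $1/(1-\alpha)$ over the finite measure space $[0,T]\times\Omega$) to each. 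For $\varphi$: $\mathbb{E}\int_0^T\varphi_t^\alpha\,dt\le T^{1-\alpha}(\mathbb{E}\int_0^T\varphi_t\,dt)^\alpha<\infty$ by (4.5). For $|v_t|$: similarly controlled by $\sup_t\mathbb{E}|v_t|^2<\infty$. For $|y_t^v|^\alpha$ and $|z_t^v|^\alpha$: I would invoke $y\in S^\beta$ and $z\in M^\beta$ with a choice of $\beta\in(\alpha,1)$, so that $\mathbb{E}\int_0^T|z_t^v|^\alpha\,dt\le\mathbb{E}[(\int_0^T|z_t^v|^2\,dt)^{\alpha/2}\cdot T^{1-\alpha/2}]\le T^{1-\alpha/2}(\mathbb{E}[(\int_0^T|z_t^v|^2\,dt)^{\beta/2}])^{\alpha/\beta}=T^{1-\alpha/2}\|z\|_{M^\beta}^\alpha<\infty$, and analogously for $y$ using $\|y\|_{S^\beta}$. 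Putting all the pieces together gives $\mathbb{E}|g(y_0^v)|+\mathbb{E}\int_0^T|h(t,y_t^v,z_t^v,v_t)|\,dt<\infty$, so $J(v)$ is a well-defined real number for each $v\in\mathcal{U}$, completing the proof.
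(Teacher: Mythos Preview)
Your argument is essentially correct but follows a route quite different from the paper's. Rather than estimating $|h|$ term by term, the paper introduces an auxiliary one-dimensional BSDE $dx_t^v = h(t,y_t^v,z_t^v,v_t)\,dt + k_t^v\,dW_t$, $x_T^v=\eta\in L^1$, stacks it with $(1)$ into an $(n{+}1)$-dimensional BSDE for $\widetilde y=(y^v,x^v)$ with driver $\widetilde b=(b,h)$, observes that $\widetilde b$ inherits hypotheses $(4)$, and invokes the same Briand et al.\ existence theorem to obtain $\widetilde y$ of class $(D)$. Then $\widetilde J(v):=\mathbb{E}[g(y_0^v)-x_0^v]+\mathbb{E}[\eta]$ is finite by $(4.3)$ together with the class-$(D)$ property of $\widetilde y$, and one checks $\widetilde J=J$. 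This augmentation trick recycles the BSDE well-posedness result wholesale and avoids any explicit growth estimates; your direct decomposition, by contrast, makes transparent exactly which hypothesis controls each piece of the cost and yields quantitative bounds on $J$.

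One genuine loose end in your write-up is the integrability of $\int_0^T|h(t,0,0,0)|\,dt$. You justify it by continuity of $h$ in $t$, which is nowhere assumed, and your fallback to $(4.4)$ with $r=0$ yields only $\phi_0\equiv 0$, which is vacuous. In fairness this reflects an omission in the paper's own hypotheses: the process $\psi_t$ in $(4.5)$ is declared integrable but is never attached to any inequality; it is evidently meant to dominate $|f(t,0,0,\cdot)|$ (as Briand et al.\ require), and once that missing clause is supplied your argument closes. The paper's augmentation route sidesteps this only cosmetically, since the same integrability of the driver at the origin is needed for the $(n{+}1)$-dimensional BSDE to fall under the existence theorem.
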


\begin{proof}
Consider the following controlled one dimensional BSDE%
\[
\left\{
\begin{array}
[c]{l}%
dx_{t}^{v}=h\left(  t,y_{t}^{v},z_{t}^{v},v_{t}\right)  dt+k_{t}^{v}dW_{t},\\
x_{T}^{v}=\eta.
\end{array}
\right.
\]
where $k^{v}=\left(  k_{1}^{v},...,k_{d}^{v}\right)  $ is an $\left(  1\times
d\right)  $ real matrix, $\left(  y^{v},z^{v}\right)  $ is the solution of
equation $\left(  1\right)  $ and $\eta$ is a one dimensional $\mathcal{F}_{T}
$-measurable random variable such that $\mathbb{E}\left\vert \eta\right\vert
<\infty.$

Under assumptions $\left(  4\right)  $, the above one dimensional BSDE\ admits
a unique adapted solution $\left(  x^{v},k^{v}\right)  $.

We put%
\[
\widetilde{y}=\left(
\begin{array}
[c]{c}%
y^{v}\\
x^{v}%
\end{array}
\right)  ,
\]
and consider now the following $\left(  n+1\right)  $-dimensional BSDE%
\[
\left\{
\begin{array}
[c]{l}%
d\widetilde{y}_{t}=\widetilde{b}\left(  t,\widetilde{y}_{t},\widetilde{z}%
_{t},v_{t}\right)  dt+\widetilde{z}_{t}dW_{t},\\
\widetilde{y}_{T}=\left(
\begin{array}
[c]{c}%
\xi\\
\eta
\end{array}
\right)  ,
\end{array}
\right.
\]
where the function $\widetilde{b}$ is defined from $\left[  0,T\right]
\times\mathbb{R}^{n+1}\times\mathcal{M}_{\left(  n+1\right)  \times d}\left(
\mathbb{R}\right)  \times U$ into $\mathbb{R}^{n+1}$ by%
\[
\widetilde{b}\left(  t,\widetilde{y}_{t},\widetilde{z}_{t},v_{t}\right)
=\left(
\begin{array}
[c]{c}%
b\left(  t,y_{t}^{v},z_{t}^{v},v_{t}\right) \\
h\left(  t,y_{t}^{v},z_{t}^{v},v_{t}\right)
\end{array}
\right)  ,
\]
and $\widetilde{z}$ is a $\left(  n+1\right)  \times d$ real matrix given by%
\[
\widetilde{z}=\left(
\begin{array}
[c]{c}%
z^{v}\\
k^{v}%
\end{array}
\right)  =\left(
\begin{array}
[c]{c}%
z_{11}^{v}\ \ z_{12}^{v}\ ...\ z_{1d}^{v}\\
z_{21}^{v}\ \ z_{22}^{v}\ ...\ z_{2d}^{v}\\
\vdots\ \ \ \ \ \ \ \ \ \ \ \ \ \ \ \ \vdots\\
z_{n1}^{v}\ \ z_{n2}^{v}\ ...\ z_{nd}^{v}\\
k_{1}^{v}\ \ k_{2}^{v}\ ...\ k_{d}^{v}%
\end{array}
\right)  ,
\]

It's obvious that $\widetilde{b}$\ satisfies hypothesis $\left(  4\right)  $,
then the above $\left(  n+1\right)  $-dimensional BSDE admits a unique adapted
solution $\left(  \widetilde{y}_{t},\widetilde{z}_{t}\right)  $.

Define now the function $\widetilde{g}$ from $\mathbb{R}^{n+1}$ into
$\mathbb{R}$ by%
\[
\widetilde{g}\left(  \widetilde{y}_{t}\right)  =g\left(  y_{t}^{v}\right)
-x_{t}^{v},
\]
and the new functional cost from $\mathcal{U}$ into $\mathbb{R}$ by
\[
\widetilde{J}\left(  v\right)  =\mathbb{E}\left[  \widetilde{g}\left(
\widetilde{y}_{0}\right)  \right]  +\mathbb{E}\left[  \eta\right]  .
\]

It's easy to see that for every $v\in\mathcal{U}$
\[
\widetilde{J}\left(  v\right)  =J\left(  v\right)  .
\]

By $\left(  4.3\right)  $ the cost $\widetilde{J}$ is well defined from
$\mathcal{U}$ into $\mathbb{R}$ and since $\widetilde{J}\left(  v\right)
=J\left(  v\right)  $, for every $v\in\mathcal{U}$, the cost $J$ is well
defined from $\mathcal{U}$ into $\mathbb{R}$.

The proof is completed.
\end{proof}

\ 

Let us now state and prove an alternative result that we will be used along
this paper. This result said that the difference betwen two solutions of BSDEs
with the same terminal condition in $L^{1}$ is a solution of BSDE in $L^{2}$,
and it is given by the following lemma.

\begin{lemma}
Let $\left(  y^{v},z^{v}\right)  $\textit{\ and }$\left(  y^{w},z^{w}\right)
$\textit{\ be the solutions of }$\left(  1\right)  $\textit{\ associated
respectively with the controls }$v$ and $w.$\textit{\ Then the following BSDE}%
\[
\left\{
\begin{array}
[c]{l}%
d\left(  y_{t}^{v}-y_{t}^{w}\right)  =\left[  b\left(  t,y_{t}^{v},z_{t}%
^{v},v_{t}\right)  -b\left(  t,y_{t}^{w},z_{t}^{w},w_{t}\right)  \right]
dt+\left(  z_{t}^{v}-z_{t}^{w}\right)  dW_{t},\\
y_{T}^{v}-y_{T}^{w}=0,
\end{array}
\right.
\]
\textit{admits a unique adapted solution }$\left(  y^{v}-y^{w},z^{v}%
-z^{w}\right)  $\textit{\ such that}%
\begin{equation}
\underset{t\in\left[  0,T\right]  }{\sup}\mathbb{E}\left\vert y_{t}^{v}%
-y_{t}^{w}\right\vert ^{2}+\mathbb{E}%
%TCIMACRO{\dint \nolimits_{0}^{T}}%
%BeginExpansion
{\displaystyle\int\nolimits_{0}^{T}}
%EndExpansion
\left\vert z_{t}^{v}-z_{t}^{w}\right\vert ^{2}dt<+\infty.
\end{equation}

\end{lemma}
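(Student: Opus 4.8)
The plan is to show that the difference process $\bar y := y^{v} - y^{w}$, $\bar z := z^{v} - z^{w}$, which a priori only lives in $\bigcap_{\beta<1} S^{\beta}\times M^{\beta}$ since each of the two summands does, actually solves a BSDE to which the classical $L^{2}$ theory applies. Writing the driver of the difference equation as
\[
\bar b(t) := b\left(t,y_{t}^{v},z_{t}^{v},v_{t}\right) - b\left(t,y_{t}^{w},z_{t}^{w},w_{t}\right),
\]
the key observation is that, because the terminal conditions coincide, $\bar y$ is the solution of the BSDE with zero terminal data and driver $\bar b$. I would first show $\mathbb{E}\int_{0}^{T}\lvert\bar b(t)\rvert^{2}\,dt<\infty$. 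To this end, split $\bar b(t)$ using the intermediate points: add and subtract $b(t,y_{t}^{w},z_{t}^{v},v_{t})$ and $b(t,y_{t}^{w},z_{t}^{w},v_{t})$, so that by assumption (4.2) (uniform boundedness of $b_{y},b_{z},b_{v}$) one gets the Lipschitz-type bound
\[
\lvert\bar b(t)\rvert \leq C\left(\lvert y_{t}^{v}-y_{t}^{w}\rvert + \lvert z_{t}^{v}-z_{t}^{w}\rvert + \lvert v_{t}-w_{t}\rvert\right).
\]
This is the step I expect to require the most care: one needs to justify that the mean-value expansion is legitimate with the stated bounds, and to control the $y$- and $z$-increments in $L^{2}$, which is exactly what we are trying to prove — so the argument must not be circular.

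To break the apparent circularity I would instead argue directly from the structure of the linear BSDE. Treat $\bar y$ as solving a \emph{linear} BSDE: there exist bounded adapted processes $A_{t}$ (valued in $n\times n$ matrices) and $B_{t}$ (valued in $\mathcal{M}_{n\times n}^{d}$) and an adapted process $\varphi_{t} := b(t,y_{t}^{w},z_{t}^{w},v_{t}) - b(t,y_{t}^{w},z_{t}^{w},w_{t})$ such that
\[
\bar b(t) = A_{t}\,\bar y_{t} + B_{t}\,\bar z_{t} + \varphi_{t},
\]
where $A_{t},B_{t}$ are obtained as the integrated derivatives along the segment joining the two solutions (using (4.1)) and are uniformly bounded by (4.2). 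Since $\lvert\varphi_{t}\rvert\leq C\lvert v_{t}-w_{t}\rvert$ and $\sup_{t}\mathbb{E}\lvert v_{t}\rvert^{2}<\infty$, $\sup_{t}\mathbb{E}\lvert w_{t}\rvert^{2}<\infty$ by the definition of admissible control, we have $\mathbb{E}\int_{0}^{T}\lvert\varphi_{t}\rvert^{2}\,dt<\infty$. Thus $\bar y$ solves a linear BSDE with zero terminal value, bounded coefficients, and a square-integrable inhomogeneous term; by the standard existence, uniqueness and a priori estimates for $L^{2}$ BSDEs (El-Karoui et al.\ [12]), this equation has a unique solution $(\tilde y,\tilde z)$ in $S^{2}\times M^{2}$ satisfying
\[
\sup_{t\in[0,T]}\mathbb{E}\lvert\tilde y_{t}\rvert^{2} + \mathbb{E}\int_{0}^{T}\lvert\tilde z_{t}\rvert^{2}\,dt \leq C\,\mathbb{E}\int_{0}^{T}\lvert\varphi_{t}\rvert^{2}\,dt < \infty.
\]

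Finally I would identify $(\tilde y,\tilde z)$ with $(\bar y,\bar z)$. Both are adapted solutions of the \emph{same} BSDE (zero terminal condition, driver $A_{t}y+B_{t}z+\varphi_{t}$); $(\tilde y,\tilde z)\in S^{2}\times M^{2}$ while $(\bar y,\bar z)\in S^{\beta}\times M^{\beta}$ for every $\beta\in(0,1)$. Invoking the uniqueness statement for BSDEs with $L^{1}$ data quoted from Briand et al.\ [5] (valid in the class $(D)\times\bigcup_{\beta>\alpha}M^{\beta}$, into which both pairs fall), the two solutions coincide. Hence $(\bar y,\bar z)=(\tilde y,\tilde z)$ enjoys the $L^{2}$ bound, which is precisely inequality (6). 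The only genuine subtlety is making sure the uniqueness class of [5] simultaneously contains the $S^{2}\times M^{2}$ solution and the difference of the two original $L^{1}$ solutions; since $S^{2}\subset(D)$ and $M^{2}\subset\bigcup_{\beta>\alpha}M^{\beta}$, this is immediate, and the proof is complete.
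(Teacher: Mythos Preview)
Your proposal is correct and follows essentially the same route as the paper: both write the difference as the solution of a linear BSDE with bounded (integrated-derivative) coefficients, zero terminal condition, and an $L^{2}$ inhomogeneous term bounded by $C|v_{t}-w_{t}|$, then invoke the classical $L^{2}$ BSDE theory (Pardoux--Peng, El~Karoui et al.). The paper's proof is terser---it writes out the mean-value expansion and appeals directly to the classical result without discussing the identification of the a~priori $S^{\beta}\times M^{\beta}$ difference with the $L^{2}$ solution---so your extra step (constructing $(\tilde y,\tilde z)$ in $S^{2}\times M^{2}$ and identifying via uniqueness in the Briand et al.\ class) is a more careful version of the same argument rather than a different approach.
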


\begin{proof}
We have%
\[
y_{t}^{v}-y_{t}^{w}=-%
%TCIMACRO{\dint \nolimits_{t}^{T}}%
%BeginExpansion
{\displaystyle\int\nolimits_{t}^{T}}
%EndExpansion
\left[  b\left(  s,y_{s}^{v},z_{s}^{v},v_{s}\right)  -b\left(  s,y_{s}%
^{w},z_{s}^{w},w_{s}\right)  \right]  ds-%
%TCIMACRO{\dint \nolimits_{t}^{T}}%
%BeginExpansion
{\displaystyle\int\nolimits_{t}^{T}}
%EndExpansion
\left(  z_{s}^{v}-z_{s}^{w}\right)  dW_{s}.
\]

Then%
\begin{align*}
&  y_{t}^{v}-y_{t}^{w}\\
&  =-%
%TCIMACRO{\dint \nolimits_{t}^{T}}%
%BeginExpansion
{\displaystyle\int\nolimits_{t}^{T}}
%EndExpansion
\left(
%TCIMACRO{\dint \nolimits_{0}^{1}}%
%BeginExpansion
{\displaystyle\int\nolimits_{0}^{1}}
%EndExpansion
b_{y}\left(  s,y_{s}^{w}+\lambda\left(  y_{s}^{v}-y_{s}^{w}\right)  ,z_{s}%
^{w}+\lambda\left(  z_{s}^{v}-z_{s}^{w}\right)  ,w_{s}+\lambda\left(
v_{s}-w_{s}\right)  \right)  d\lambda\right)  \left(  y_{s}^{v}-y_{s}%
^{w}\right)  ds\\
&  -\left(
%TCIMACRO{\dint \nolimits_{0}^{1}}%
%BeginExpansion
{\displaystyle\int\nolimits_{0}^{1}}
%EndExpansion
b_{z}\left(  s,y_{s}^{w}+\lambda\left(  y_{s}^{v}-y_{s}^{w}\right)  ,z_{s}%
^{w}+\lambda\left(  z_{s}^{v}-z_{s}^{w}\right)  ,w_{s}+\lambda\left(
v_{s}-w_{s}\right)  \right)  d\lambda\right)  \left(  z_{s}^{v}-z_{s}%
^{w}\right)  ds\\
&  -%
%TCIMACRO{\dint \nolimits_{t}^{T}}%
%BeginExpansion
{\displaystyle\int\nolimits_{t}^{T}}
%EndExpansion
\left(
%TCIMACRO{\dint \nolimits_{0}^{1}}%
%BeginExpansion
{\displaystyle\int\nolimits_{0}^{1}}
%EndExpansion
b_{v}\left(  s,y_{s}^{w}+\lambda\left(  y_{s}^{v}-y_{s}^{w}\right)  ,z_{s}%
^{w}+\lambda\left(  z_{s}^{v}-z_{s}^{w}\right)  ,w_{s}+\lambda\left(
v_{s}-w_{s}\right)  \right)  d\lambda\right)  \left(  v_{s}-w_{s}\right)  ds\\
&  -%
%TCIMACRO{\dint \nolimits_{t}^{T}}%
%BeginExpansion
{\displaystyle\int\nolimits_{t}^{T}}
%EndExpansion
\left(  z_{s}^{v}-z_{s}^{w}\right)  dW_{s}.
\end{align*}

The above equation is a linear BSDE.\ Since $b_{y},b_{z},b_{v}$ are bounded,
the terminal condition $y_{T}^{v}-y_{T}^{w}=0$ and the controls are in $L^{2}
$, then by a classical result on BSDEs (see Pardoux-Peng $\left[  19\right]
$, El Karoui et al $\left[  12\right]  $), we have the desired results.
\end{proof}

\section{Preliminary results}

Since the control domain $U$ is convex, the classical way consists to use the
convex perturbation method. More precisely, let $u$ be an optimal control
minimizing the cost $J$ over $\mathcal{U}$ and $\left(  y_{t}^{u},z_{t}%
^{u}\right)  $ the solution of $\left(  1\right)  $ controlled by $u$. Define
a perturbed control as follows
\[
u_{t}^{\theta}=u_{t}+\theta\left(  v_{t}-u_{t}\right)  ,
\]
where $\theta>0$ is sufficiently small and $v$ is an arbitrary element of
$\mathcal{U}$.

It's clear that $u^{\theta}$ is an element of $\mathcal{U}$ (admissible control).

Denote by $\left(  y_{t}^{\theta},z_{t}^{\theta}\right)  $ the solution of
$\left(  1\right)  $ associated with $u^{\theta}$.

\ 

Since $u$ is optimal, the variational inequality follows from the fact that%
\[
0\leq J\left(  u^{\theta}\right)  -J\left(  u\right)  .
\]

This is can be proved by using the following lemmas.

\begin{lemma}
\textit{Under assumptions }$\left(  4\right)  $, we have%
\begin{equation}
\underset{\theta\rightarrow0}{\lim}\left(  \underset{t\in\left[  0,T\right]
}{\sup}\mathbb{E}\left\vert y_{t}^{\theta}-y_{t}^{u}\right\vert ^{2}%
+\mathbb{E}%
%TCIMACRO{\dint \nolimits_{0}^{T}}%
%BeginExpansion
{\displaystyle\int\nolimits_{0}^{T}}
%EndExpansion
\left\vert z_{t}^{\theta}-z_{t}^{u}\right\vert ^{2}dt\right)  =0.
\end{equation}

\end{lemma}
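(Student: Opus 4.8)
The plan is to exploit Lemma 3 (the alternative result): since $y^{\theta}$ and $y^{u}$ are solutions of (1) associated respectively with $u^{\theta}$ and $u$, sharing the same $L^{1}$ terminal condition $\xi$, their difference satisfies a linear BSDE with zero terminal data, which lives in $L^{2}$. Writing the difference of the drifts via the fundamental theorem of calculus along the segment joining $(y^{u},z^{u},u)$ to $(y^{\theta},z^{\theta},u^{\theta})$, one obtains
\[
y_{t}^{\theta}-y_{t}^{u}=-\int_{t}^{T}\left[ A_{s}\left( y_{s}^{\theta}-y_{s}^{u}\right) +B_{s}\left( z_{s}^{\theta}-z_{s}^{u}\right) +\theta C_{s}\left( v_{s}-u_{s}\right) \right] ds-\int_{t}^{T}\left( z_{s}^{\theta}-z_{s}^{u}\right) dW_{s},
\]
where $A,B,C$ are the averaged values of $b_{y},b_{z},b_{v}$, which are uniformly bounded by assumption (4.2).

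The key estimate is a standard one for linear BSDEs in $L^{2}$. First I would apply It\^{o}'s formula to $\left\vert y_{t}^{\theta}-y_{t}^{u}\right\vert ^{2}$ between $t$ and $T$, take expectations to kill the martingale part, and obtain
\[
\mathbb{E}\left\vert y_{t}^{\theta}-y_{t}^{u}\right\vert ^{2}+\mathbb{E}\int_{t}^{T}\left\vert z_{s}^{\theta}-z_{s}^{u}\right\vert ^{2}ds=2\mathbb{E}\int_{t}^{T}\left( y_{s}^{\theta}-y_{s}^{u}\right) \left[ A_{s}\left( y_{s}^{\theta}-y_{s}^{u}\right) +B_{s}\left( z_{s}^{\theta}-z_{s}^{u}\right) +\theta C_{s}\left( v_{s}-u_{s}\right) \right] ds.
\]
Using the boundedness of $A,B,C$ and Young's inequality $2ab\leq\varepsilon a^{2}+\varepsilon^{-1}b^{2}$ to absorb the $z$-term with a small coefficient (say $\tfrac12$) into the left-hand side, one is left with
\[
\mathbb{E}\left\vert y_{t}^{\theta}-y_{t}^{u}\right\vert ^{2}+\tfrac12\mathbb{E}\int_{t}^{T}\left\vert z_{s}^{\theta}-z_{s}^{u}\right\vert ^{2}ds\leq C\int_{t}^{T}\mathbb{E}\left\vert y_{s}^{\theta}-y_{s}^{u}\right\vert ^{2}ds+C\theta^{2}\mathbb{E}\int_{0}^{T}\left\vert v_{s}-u_{s}\right\vert ^{2}ds.
\]
Gronwall's lemma then gives $\sup_{t}\mathbb{E}\left\vert y_{t}^{\theta}-y_{t}^{u}\right\vert ^{2}\leq C\theta^{2}\,\mathbb{E}\int_{0}^{T}\left\vert v_{s}-u_{s}\right\vert ^{2}ds$, and plugging this back bounds $\mathbb{E}\int_{0}^{T}\left\vert z_{s}^{\theta}-z_{s}^{u}\right\vert ^{2}ds$ by the same quantity. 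Since $u,v\in\mathcal{U}$ satisfy $\sup_{t}\mathbb{E}\left\vert v_{t}\right\vert ^{2}<\infty$, the right-hand side is finite and $O(\theta^{2})$, so letting $\theta\to0$ yields (6).

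The main subtlety — and the reason Lemma 3 is invoked first — is that a priori $y^{\theta}$ and $y^{u}$ only belong to the class $(D)$ and $z^{\theta},z^{u}$ only to $\bigcup_{\beta>\alpha}M^{\beta}$, so It\^{o}'s formula on $\left\vert \cdot\right\vert ^{2}$ and the resulting $L^{2}$ manipulations are not automatically justified for them individually. Lemma 3 circumvents this: it tells us the \emph{difference} process is genuinely an $L^{2}$-solution of a linear BSDE with square-integrable data (zero terminal value, bounded coefficients, $L^{2}$ controls), so all the above computations are legitimate. One should also check that the perturbation control $u^{\theta}=u+\theta(v-u)$ lies in $\mathcal{U}$ — which is clear since $U$ is convex and $\sup_{t}\mathbb{E}\left\vert u_{t}^{\theta}\right\vert ^{2}\leq 2\sup_{t}\mathbb{E}\left\vert u_{t}\right\vert ^{2}+2\theta^{2}\sup_{t}\mathbb{E}\left\vert v_{t}-u_{t}\right\vert ^{2}<\infty$ — as already noted in the text. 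The remaining steps are the routine linear-BSDE estimates sketched above.
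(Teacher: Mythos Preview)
Your proposal is correct and follows essentially the same route as the paper: invoke Lemma~3 to justify that the difference $(y^{\theta}-y^{u},z^{\theta}-z^{u})$ lives in $L^{2}$, then apply It\^{o}'s formula to $\lvert y^{\theta}-y^{u}\rvert^{2}$, use the boundedness of the derivatives of $b$ together with Young's inequality to absorb the $z$-term, and conclude via Gronwall. Your use of the mean-value linearization $A_{s},B_{s},C_{s}$ in place of the paper's two-step Lipschitz splitting is a cosmetic variant (and in fact yields the slightly sharper $O(\theta^{2})$ bound rather than the paper's $O(\theta)$), but the core argument is the same.
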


\begin{proof}
By $\left(  5\right)  $, we have%
\[
\underset{t\in\left[  0,T\right]  }{\sup}\mathbb{E}\left\vert y_{t}^{\theta
}-y_{t}^{u}\right\vert ^{2}+\mathbb{E}%
%TCIMACRO{\dint \nolimits_{0}^{T}}%
%BeginExpansion
{\displaystyle\int\nolimits_{0}^{T}}
%EndExpansion
\left\vert z_{t}^{\theta}-z_{t}^{u}\right\vert ^{2}dt<+\infty.
\]

Applying the Ito formula to $\left(  y_{t}^{\theta}-y_{t}^{u}\right)  ^{2}$,
we get%
\begin{align*}
&  \mathbb{E}\left\vert y_{t}^{\theta}-y_{t}^{u}\right\vert ^{2}+\mathbb{E}%
%TCIMACRO{\dint \nolimits_{t}^{T}}%
%BeginExpansion
{\displaystyle\int\nolimits_{t}^{T}}
%EndExpansion
\left\vert z_{s}^{\theta}-z_{s}^{u}\right\vert ^{2}ds\\
&  =2\mathbb{E}%
%TCIMACRO{\dint \nolimits_{t}^{T}}%
%BeginExpansion
{\displaystyle\int\nolimits_{t}^{T}}
%EndExpansion
\left\vert \left(  y_{s}^{\theta}-y_{s}^{u}\right)  \left(  b\left(
s,y_{s}^{\theta},z_{s}^{\theta},u_{s}^{\theta}\right)  -b\left(  s,y_{s}%
^{u},z_{s}^{u},u_{s}\right)  \right)  \right\vert \,ds\\
&  \leq2\mathbb{E}%
%TCIMACRO{\dint \nolimits_{t}^{T}}%
%BeginExpansion
{\displaystyle\int\nolimits_{t}^{T}}
%EndExpansion
\left\vert \left(  y_{s}^{\theta}-y_{s}^{u}\right)  \left(  b\left(
s,y_{s}^{\theta},z_{s}^{\theta},u_{s}^{\theta}\right)  -b\left(  s,y_{s}%
^{u},z_{s}^{u},u_{s}^{\theta}\right)  \right)  \right\vert \,ds\\
&  +2\mathbb{E}%
%TCIMACRO{\dint \nolimits_{t}^{T}}%
%BeginExpansion
{\displaystyle\int\nolimits_{t}^{T}}
%EndExpansion
\left\vert \left(  y_{s}^{\theta}-y_{s}^{u}\right)  \left(  b\left(
s,y_{s}^{u},z_{s}^{u},u_{s}^{\theta}\right)  -b\left(  s,y_{s}^{u},z_{s}%
^{u},u_{s}\right)  \right)  \right\vert \,ds.
\end{align*}

Applying the Young's formula to the first term in the right hand side of the
above inequality, we have for every $\varepsilon>0$%
\begin{align*}
&  \mathbb{E}\left\vert y_{t}^{\theta}-y_{t}^{u}\right\vert ^{2}+\mathbb{E}%
%TCIMACRO{\dint \nolimits_{t}^{T}}%
%BeginExpansion
{\displaystyle\int\nolimits_{t}^{T}}
%EndExpansion
\left\vert z_{s}^{\theta}-z_{s}^{u}\right\vert ^{2}ds\\
&  \leq%
%TCIMACRO{\QDOVERD{.}{.}{1}{\varepsilon}}%
%BeginExpansion
\genfrac{.}{.}{}{0}{1}{\varepsilon}%
%EndExpansion
\mathbb{E}%
%TCIMACRO{\dint \nolimits_{t}^{T}}%
%BeginExpansion
{\displaystyle\int\nolimits_{t}^{T}}
%EndExpansion
\left\vert y_{s}^{\theta}-y_{s}^{u}\right\vert ^{2}ds+\varepsilon\mathbb{E}%
%TCIMACRO{\dint \nolimits_{t}^{T}}%
%BeginExpansion
{\displaystyle\int\nolimits_{t}^{T}}
%EndExpansion
\left\vert b\left(  s,y_{s}^{\theta},z_{s}^{\theta},u_{s}^{\theta}\right)
-b\left(  s,y_{s}^{u},z_{s}^{u},u_{s}^{\theta}\right)  \right\vert ^{2}ds\\
&  +2\mathbb{E}%
%TCIMACRO{\dint \nolimits_{t}^{T}}%
%BeginExpansion
{\displaystyle\int\nolimits_{t}^{T}}
%EndExpansion
\left\vert \left(  y_{s}^{\theta}-y_{s}^{u}\right)  \left(  b\left(
s,y_{s}^{u},z_{s}^{u},u_{s}^{\theta}\right)  -b\left(  s,y_{s}^{u},z_{s}%
^{u},u_{s}\right)  \right)  \right\vert \,ds.
\end{align*}

By $\left(  4.2\right)  $, $b$ is uniformly Lipschitz with respect $\left(
y,z,v\right)  $. Then%
\begin{align*}
&  \mathbb{E}\left\vert y_{t}^{\theta}-y_{t}^{u}\right\vert ^{2}+\mathbb{E}%
%TCIMACRO{\dint \nolimits_{t}^{T}}%
%BeginExpansion
{\displaystyle\int\nolimits_{t}^{T}}
%EndExpansion
\left\vert z_{s}^{\theta}-z_{s}^{u}\right\vert ^{2}ds\\
&  \leq\left(
%TCIMACRO{\QDOVERD{.}{.}{1}{\varepsilon}}%
%BeginExpansion
\genfrac{.}{.}{}{0}{1}{\varepsilon}%
%EndExpansion
+C\varepsilon\right)
%TCIMACRO{\dint \nolimits_{t}^{T}}%
%BeginExpansion
{\displaystyle\int\nolimits_{t}^{T}}
%EndExpansion
\mathbb{E}\left\vert y_{s}^{\theta}-y_{s}^{u}\right\vert ^{2}ds+C\varepsilon%
%TCIMACRO{\dint \nolimits_{t}^{T}}%
%BeginExpansion
{\displaystyle\int\nolimits_{t}^{T}}
%EndExpansion
\mathbb{E}\left\vert z_{s}^{\theta}-z_{s}^{u}\right\vert ^{2}ds\\
&  +C\theta%
%TCIMACRO{\dint \nolimits_{t}^{T}}%
%BeginExpansion
{\displaystyle\int\nolimits_{t}^{T}}
%EndExpansion
\mathbb{E}\left[  \left\vert y_{s}^{\theta}-y_{s}^{u}\right\vert \left\vert
v_{s}-u_{s}\right\vert \right]  ds.
\end{align*}

Applying the Cauchy-Schwarz inequality to the third term in the right hand
side of the above inequality, we get%
\begin{align*}
&  \mathbb{E}\left\vert y_{t}^{\theta}-y_{t}^{u}\right\vert ^{2}+\mathbb{E}%
%TCIMACRO{\dint \nolimits_{t}^{T}}%
%BeginExpansion
{\displaystyle\int\nolimits_{t}^{T}}
%EndExpansion
\left\vert z_{s}^{\theta}-z_{s}^{u}\right\vert ^{2}ds\\
&  \leq\left(
%TCIMACRO{\QDOVERD{.}{.}{1}{\varepsilon}}%
%BeginExpansion
\genfrac{.}{.}{}{0}{1}{\varepsilon}%
%EndExpansion
+C\varepsilon\right)
%TCIMACRO{\dint \nolimits_{t}^{T}}%
%BeginExpansion
{\displaystyle\int\nolimits_{t}^{T}}
%EndExpansion
\mathbb{E}\left\vert y_{s}^{\theta}-y_{s}^{u}\right\vert ^{2}ds+C\varepsilon%
%TCIMACRO{\dint \nolimits_{t}^{T}}%
%BeginExpansion
{\displaystyle\int\nolimits_{t}^{T}}
%EndExpansion
\mathbb{E}\left\vert z_{s}^{\theta}-z_{s}^{u}\right\vert ^{2}ds\\
&  +C\theta\left(
%TCIMACRO{\dint \nolimits_{t}^{T}}%
%BeginExpansion
{\displaystyle\int\nolimits_{t}^{T}}
%EndExpansion
\mathbb{E}\left\vert y_{s}^{\theta}-y_{s}^{u}\right\vert ^{2}ds\right)
^{1/2}\left(
%TCIMACRO{\dint \nolimits_{t}^{T}}%
%BeginExpansion
{\displaystyle\int\nolimits_{t}^{T}}
%EndExpansion
\mathbb{E}\left\vert v_{s}-u_{s}\right\vert 2ds\right)  ^{1/2}.
\end{align*}

Using definition 1 and $\left(  6\right)  $, we have%
\begin{align*}
&  \mathbb{E}\left\vert y_{t}^{\theta}-y_{t}^{u}\right\vert ^{2}+\mathbb{E}%
%TCIMACRO{\dint \nolimits_{t}^{T}}%
%BeginExpansion
{\displaystyle\int\nolimits_{t}^{T}}
%EndExpansion
\left\vert z_{s}^{\theta}-z_{s}^{u}\right\vert ^{2}ds\\
&  \leq\left(
%TCIMACRO{\QDOVERD{.}{.}{1}{\varepsilon}}%
%BeginExpansion
\genfrac{.}{.}{}{0}{1}{\varepsilon}%
%EndExpansion
+C\varepsilon\right)
%TCIMACRO{\dint \nolimits_{t}^{T}}%
%BeginExpansion
{\displaystyle\int\nolimits_{t}^{T}}
%EndExpansion
\mathbb{E}\left\vert y_{s}^{\theta}-y_{s}^{u}\right\vert ^{2}ds+C\varepsilon%
%TCIMACRO{\dint \nolimits_{t}^{T}}%
%BeginExpansion
{\displaystyle\int\nolimits_{t}^{T}}
%EndExpansion
\mathbb{E}\left\vert z_{s}^{\theta}-z_{s}^{u}\right\vert ^{2}ds\\
&  +C\varepsilon%
%TCIMACRO{\dint \nolimits_{t}^{T}}%
%BeginExpansion
{\displaystyle\int\nolimits_{t}^{T}}
%EndExpansion
\mathbb{E}\left\vert z_{s}^{\theta}-z_{s}^{u}\right\vert ^{2}ds+C\theta.
\end{align*}

Choose $\varepsilon=%
%TCIMACRO{\QDOVERD{.}{.}{1}{2C}}%
%BeginExpansion
\genfrac{.}{.}{}{0}{1}{2C}%
%EndExpansion
$, then we get%
\[
\mathbb{E}\left\vert y_{t}^{\theta}-y_{t}^{u}\right\vert ^{2}+%
%TCIMACRO{\QDOVERD{.}{.}{1}{2}}%
%BeginExpansion
\genfrac{.}{.}{}{0}{1}{2}%
%EndExpansion
\mathbb{E}%
%TCIMACRO{\dint \nolimits_{t}^{T}}%
%BeginExpansion
{\displaystyle\int\nolimits_{t}^{T}}
%EndExpansion
\left\vert z_{s}^{\theta}-z_{s}^{u}\right\vert ^{2}ds\leq\left(  2C+%
%TCIMACRO{\QDOVERD{.}{.}{1}{2}}%
%BeginExpansion
\genfrac{.}{.}{}{0}{1}{2}%
%EndExpansion
\right)
%TCIMACRO{\dint \nolimits_{t}^{T}}%
%BeginExpansion
{\displaystyle\int\nolimits_{t}^{T}}
%EndExpansion
\mathbb{E}\left\vert y_{s}^{\theta}-y_{s}^{u}\right\vert ^{2}ds+C\theta.
\]

From this above inequality, we deduce two inequalities%
\begin{equation}
\mathbb{E}\left\vert y_{t}^{\theta}-y_{t}^{u}\right\vert ^{2}\leq\left(  2C+%
%TCIMACRO{\QDOVERD{.}{.}{1}{2}}%
%BeginExpansion
\genfrac{.}{.}{}{0}{1}{2}%
%EndExpansion
\right)
%TCIMACRO{\dint \nolimits_{t}^{T}}%
%BeginExpansion
{\displaystyle\int\nolimits_{t}^{T}}
%EndExpansion
\mathbb{E}\left\vert y_{s}^{\theta}-y_{s}^{u}\right\vert ^{2}ds+C\theta.
\end{equation}%
\begin{equation}
\mathbb{E}%
%TCIMACRO{\dint \nolimits_{t}^{T}}%
%BeginExpansion
{\displaystyle\int\nolimits_{t}^{T}}
%EndExpansion
\left\vert z_{s}^{\theta}-z_{s}^{u}\right\vert ^{2}ds\leq\left(  4C+1\right)
%TCIMACRO{\dint \nolimits_{t}^{T}}%
%BeginExpansion
{\displaystyle\int\nolimits_{t}^{T}}
%EndExpansion
\mathbb{E}\left\vert y_{s}^{\theta}-y_{s}^{u}\right\vert ^{2}ds+C\theta.
\end{equation}

By $\left(  7\right)  $, Gronwall lemma and Buckholers-Davis-Gundy inequality,
we have%
\[
\underset{\theta\rightarrow0}{\lim}\left(  \underset{t\in\left[  0,T\right]
}{\sup}\mathbb{E}\left\vert y_{t}^{\theta}-y_{t}^{u}\right\vert ^{2}\right)
=0.
\]

Finally, by $\left(  8\right)  $ and the above result, we obtain%
\[
\underset{\theta\rightarrow0}{\lim}\mathbb{E}%
%TCIMACRO{\dint \nolimits_{t}^{T}}%
%BeginExpansion
{\displaystyle\int\nolimits_{t}^{T}}
%EndExpansion
\left\vert z_{s}^{\theta}-z_{s}^{u}\right\vert ^{2}ds=0.
\]

The lemma is proved.
\end{proof}

\begin{lemma}
For every $v\in\mathcal{U}$, the\textit{\ following linear BSDE}%
\begin{equation}
\left\{
\begin{array}
[c]{ll}%
dY_{t}= & \left[  b_{y}\left(  t,y_{t}^{u},z_{t}^{u},u_{t}\right)  Y_{t}%
+b_{z}\left(  t,y_{t}^{u},z_{t}^{u},u_{t}\right)  Z_{t}\right]  dt\\
& b_{v}\left(  t,y_{t}^{u},z_{t}^{u},u_{t}\right)  \left(  v_{t}-u_{t}\right)
dt+Z_{t}dW_{t},\\
Y_{T}= & 0,
\end{array}
\right.
\end{equation}
admits a unique\ adapted solution $\left(  Y,Z\right)  $\textit{\ such that}%
\begin{equation}
\underset{t\in\left[  0,T\right]  }{\sup}\mathbb{E}\left\vert Y_{t}\right\vert
^{2}+\mathbb{E}%
%TCIMACRO{\dint \nolimits_{0}^{T}}%
%BeginExpansion
{\displaystyle\int\nolimits_{0}^{T}}
%EndExpansion
\left\vert Z_{t}\right\vert ^{2}dt<\infty.
\end{equation}%
\begin{equation}
\underset{\theta\rightarrow0}{\lim}\left(  \mathbb{E}\left\vert Y_{t}-%
%TCIMACRO{\QDOVERD{.}{.}{y_{t}^{\theta}-y_{t}^{u}}{\theta}}%
%BeginExpansion
\genfrac{.}{.}{}{0}{y_{t}^{\theta}-y_{t}^{u}}{\theta}%
%EndExpansion
\right\vert ^{2}+\mathbb{E}%
%TCIMACRO{\dint \nolimits_{0}^{T}}%
%BeginExpansion
{\displaystyle\int\nolimits_{0}^{T}}
%EndExpansion
\left\vert Z_{t}-%
%TCIMACRO{\QDOVERD{.}{.}{z_{t}^{\theta}-z_{t}^{u}}{\theta}}%
%BeginExpansion
\genfrac{.}{.}{}{0}{z_{t}^{\theta}-z_{t}^{u}}{\theta}%
%EndExpansion
\right\vert ^{2}dt\right)  =0.
\end{equation}

\end{lemma}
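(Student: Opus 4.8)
The plan is to prove the two assertions in turn: first the well-posedness of the linear BSDE $(9)$ together with the bound $(10)$, and then the convergence $(11)$.

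For the well-posedness, I would note that $(9)$ is a linear BSDE whose driver is $(t,Y,Z)\mapsto b_y(t,y_t^u,z_t^u,u_t)Y+b_z(t,y_t^u,z_t^u,u_t)Z+b_v(t,y_t^u,z_t^u,u_t)(v_t-u_t)$. By $(4.2)$ the matrices $b_y,b_z,b_v$ are uniformly bounded, so the driver is Lipschitz in $(Y,Z)$ with a deterministic constant, and its free term $t\mapsto b_v(t,y_t^u,z_t^u,u_t)(v_t-u_t)$ is square integrable on $[0,T]\times\Omega$ since $\mathbb{E}\int_0^T|v_t-u_t|^2\,dt<\infty$ by Definition 1. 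As the terminal value $0$ is square integrable, the classical theory of $L^2$ BSDEs (Pardoux-Peng $[19]$, El Karoui et al $[12]$) yields a unique adapted solution $(Y,Z)\in S^2\times M^2$, which is $(10)$.

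For $(11)$, put $Y_t^\theta=\theta^{-1}(y_t^\theta-y_t^u)$ and $Z_t^\theta=\theta^{-1}(z_t^\theta-z_t^u)$. By Lemma 3 the pair $(y^\theta-y^u,z^\theta-z^u)$ solves a BSDE with zero terminal value; applying the fundamental theorem of calculus to $\lambda\mapsto b(t,y_t^u+\lambda(y_t^\theta-y_t^u),z_t^u+\lambda(z_t^\theta-z_t^u),u_t+\lambda\theta(v_t-u_t))$ exactly as in the proof of Lemma 3 and dividing by $\theta$ (using $u^\theta-u=\theta(v-u)$), one finds that $(Y^\theta,Z^\theta)$ solves
\[
\left\{
\begin{array}{l}
dY_t^\theta=\left[\beta_y^\theta(t)Y_t^\theta+\beta_z^\theta(t)Z_t^\theta+\beta_v^\theta(t)(v_t-u_t)\right]dt+Z_t^\theta\,dW_t,\\
Y_T^\theta=0,
\end{array}
\right.
\]
where $\beta_y^\theta(t)=\int_0^1 b_y(t,y_t^u+\lambda(y_t^\theta-y_t^u),z_t^u+\lambda(z_t^\theta-z_t^u),u_t+\lambda\theta(v_t-u_t))\,d\lambda$ and $\beta_z^\theta,\beta_v^\theta$ are defined in the same way with $b_z,b_v$ in place of $b_y$. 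Subtracting $(9)$ and setting $\Gamma^\theta=Y^\theta-Y$, $\Lambda^\theta=Z^\theta-Z$, I obtain a linear BSDE for $(\Gamma^\theta,\Lambda^\theta)$ with zero terminal value, the bounded Lipschitz coefficients $\beta_y^\theta,\beta_z^\theta$, and free term
\[
\rho_t^\theta=\left(\beta_y^\theta(t)-b_y(t,y_t^u,z_t^u,u_t)\right)Y_t+\left(\beta_z^\theta(t)-b_z(t,y_t^u,z_t^u,u_t)\right)Z_t+\left(\beta_v^\theta(t)-b_v(t,y_t^u,z_t^u,u_t)\right)(v_t-u_t).
\]
The standard $L^2$ estimate for linear BSDEs (with a constant depending only on $T$ and the uniform bound on $b_y,b_z$, hence uniform in $\theta$) then gives $\sup_{t}\mathbb{E}|\Gamma_t^\theta|^2+\mathbb{E}\int_0^T|\Lambda_t^\theta|^2\,dt\le C\,\mathbb{E}\int_0^T|\rho_t^\theta|^2\,dt$, so $(11)$ follows once I show $\mathbb{E}\int_0^T|\rho_t^\theta|^2\,dt\to0$.

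This last convergence is the main point of the proof. By Lemma 4 one has $\sup_t\mathbb{E}|y_t^\theta-y_t^u|^2\to0$ and $\mathbb{E}\int_0^T|z_t^\theta-z_t^u|^2\,dt\to0$, and $\theta(v-u)\to0$ in $L^2([0,T]\times\Omega)$; hence the randomized arguments $y_t^u+\lambda(y_t^\theta-y_t^u)$, $z_t^u+\lambda(z_t^\theta-z_t^u)$, $u_t+\lambda\theta(v_t-u_t)$ converge to $y_t^u,z_t^u,u_t$ in measure $dt\otimes d\mathbb{P}$, uniformly in $\lambda\in[0,1]$. Passing to a subsequence along which this convergence holds $dt\otimes d\mathbb{P}$-a.e. and using that $b_y,b_z,b_v$ are continuous and uniformly bounded in $(y,z,v)$ by $(4.2)$, dominated convergence shows $\beta_y^\theta-b_y(\cdot),\ \beta_z^\theta-b_z(\cdot),\ \beta_v^\theta-b_v(\cdot)\to0$ boundedly and $dt\otimes d\mathbb{P}$-a.e.; since $Y,Z\in M^2$ and $v-u\in M^2$, a further application of dominated convergence gives $\mathbb{E}\int_0^T|\rho_t^\theta|^2\,dt\to0$ along that subsequence, and the usual subsequence-of-subsequence argument upgrades this to convergence as $\theta\to0$. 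The obstacle is exactly this step: one must push the convergence of the averaged coefficients through the nonlinearities $b_y,b_z,b_v$, which is why both the continuity-and-boundedness assumption $(4.2)$ and the $L^2$ approximation of Lemma 4 are needed; the remaining ingredients are routine linear-BSDE estimates.
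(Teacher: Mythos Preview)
Your proof is correct and follows the same overall architecture as the paper: both arguments derive a linear BSDE for the difference $\Gamma^\theta=Y^\theta-Y$, $\Lambda^\theta=Z^\theta-Z$ (the paper writes $\Phi^\theta=-\Gamma^\theta$, $\Psi^\theta=-\Lambda^\theta$) with bounded Lipschitz coefficients and a free term $\rho^\theta$, and then reduce $(11)$ to showing $\mathbb{E}\int_0^T|\rho_t^\theta|^2\,dt\to0$. Where you cite the ``standard $L^2$ estimate for linear BSDEs'', the paper spells it out by applying It\^o's formula to $|\Phi^\theta|^2$, using Young's inequality with $\varepsilon=1/(2C)$, and then Gronwall; this is the same estimate.

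The one genuine difference is in how the convergence $\mathbb{E}\int_0^T|\rho_t^\theta|^2\,dt\to0$ is obtained. The paper invokes assumption $(4.6)$ (Lipschitz continuity of $b_y,b_z,b_v$ in the $z$-variable): it splits each difference $\beta_\bullet^\theta-b_\bullet$ into a piece where only the $z$-argument varies, controlled by $C|z^\theta-z^u|$ via $(4.6)$, and a remaining piece with $z$ frozen at $z^u$, handled by continuity, boundedness and dominated convergence. Your argument bypasses $(4.6)$ entirely: you use Lemma~4 to get convergence in measure of the perturbed arguments, pass to an a.e.\ convergent subsequence, apply the continuity and uniform boundedness from $(4.2)$ alone, and dominate $|\rho^\theta|^2$ by $C(|Y|^2+|Z|^2+|v-u|^2)\in L^1$; the subsequence-of-subsequence trick then gives convergence along the full family. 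This is slightly more elementary in that it shows $(4.6)$ is not actually needed for this lemma, at the cost of the extra subsequence bookkeeping; the paper's splitting, on the other hand, gives a direct quantitative bound without any subsequence extraction.
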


\begin{proof}
i) Assertion $\left(  10\right)  $ is obvious since the BSDE $\left(
9\right)  $ is linear, $b_{y},$ $b_{z},b_{v}$ are bounded and the terminal
condition $Y_{T}=0.$

ii) Let us prove $\left(  11\right)  $.

Put%
\begin{align*}
\Phi_{t}^{\theta}  &  =Y_{t}-%
%TCIMACRO{\QDOVERD{.}{.}{y_{t}^{\theta}-y_{t}^{u}}{\theta}}%
%BeginExpansion
\genfrac{.}{.}{}{0}{y_{t}^{\theta}-y_{t}^{u}}{\theta}%
%EndExpansion
,\\
\Psi_{t}^{\theta}  &  =Z_{t}-%
%TCIMACRO{\QDOVERD{.}{.}{z_{t}^{\theta}-z_{t}^{u}}{\theta}}%
%BeginExpansion
\genfrac{.}{.}{}{0}{z_{t}^{\theta}-z_{t}^{u}}{\theta}%
%EndExpansion
.
\end{align*}

We have%
\[
d\Phi_{t}^{\theta}=\left[  B_{y}^{\theta}\left(  t\right)  \Phi_{t}^{\theta
}+B_{z}^{\theta}\left(  t\right)  \Psi_{t}^{\theta}+\rho_{t}^{\theta}\right]
dt+\Psi_{t}^{\theta}dW_{t},
\]
where
\begin{align*}
B_{y}^{\theta}\left(  t\right)   &  =%
%TCIMACRO{\dint \nolimits_{0}^{1}}%
%BeginExpansion
{\displaystyle\int\nolimits_{0}^{1}}
%EndExpansion
b_{y}\left(  t,y_{t}^{u}+\lambda\left(  y_{t}^{\theta}-y_{t}^{u}\right)
,z_{t}^{u}+\lambda\left(  z_{t}^{\theta}-z_{t}^{u}\right)  ,u_{t}%
+\lambda\theta\left(  v_{t}-u_{t}\right)  \right)  d\lambda,\\
B_{z}^{\theta}\left(  t\right)   &  =%
%TCIMACRO{\dint \nolimits_{0}^{1}}%
%BeginExpansion
{\displaystyle\int\nolimits_{0}^{1}}
%EndExpansion
b_{z}\left(  t,y_{t}^{u}+\lambda\left(  y_{t}^{\theta}-y_{t}^{u}\right)
,z_{t}^{u}+\lambda\left(  z_{t}^{\theta}-z_{t}^{u}\right)  ,u_{t}%
+\lambda\theta\left(  v_{t}-u_{t}\right)  \right)  d\lambda,\\
\rho_{t}^{\theta}  &  =%
%TCIMACRO{\dint \nolimits_{0}^{1}}%
%BeginExpansion
{\displaystyle\int\nolimits_{0}^{1}}
%EndExpansion
\left[  b_{y}\left(  t,y_{t}^{u}+\lambda\left(  y_{t}^{\theta}-y_{t}%
^{u}\right)  ,z_{t}^{u}+\lambda\left(  z_{t}^{\theta}-z_{t}^{u}\right)
,u_{t}+\lambda\theta\left(  v_{t}-u_{t}\right)  \right)  \right. \\
&  \ \ \ \ \left.  -b_{y}\left(  t,y_{t}^{u},z_{t}^{u},u_{t}\right)  \right]
Y_{t}d\lambda\\
&  +%
%TCIMACRO{\dint \nolimits_{0}^{1}}%
%BeginExpansion
{\displaystyle\int\nolimits_{0}^{1}}
%EndExpansion
\left[  b_{z}\left(  t,y_{t}^{u}+\lambda\left(  y_{t}^{\theta}-y_{t}%
^{u}\right)  ,z_{t}^{u}+\lambda\left(  z_{t}^{\theta}-z_{t}^{u}\right)
,u_{t}+\lambda\theta\left(  v_{t}-u_{t}\right)  \right)  \right. \\
&  \ \ \ \ \left.  -b_{z}\left(  t,y_{t}^{u},z_{t}^{u},u_{t}\right)  \right]
Z_{t}d\lambda\\
&  +%
%TCIMACRO{\dint \nolimits_{0}^{1}}%
%BeginExpansion
{\displaystyle\int\nolimits_{0}^{1}}
%EndExpansion
\left[  b_{v}\left(  t,y_{t}^{u}+\lambda\left(  y_{t}^{\theta}-y_{t}%
^{u}\right)  ,z_{t}^{u}+\lambda\left(  z_{t}^{\theta}-z_{t}^{u}\right)
,u_{t}+\lambda\theta\left(  v_{t}-u_{t}\right)  \right)  \right. \\
&  \ \ \ \ \left.  -b_{v}\left(  t,y_{t}^{u},z_{t}^{u},u_{t}\right)  \right]
\left(  u_{t}-v_{t}\right)  d\lambda.
\end{align*}

By $\left(  5\right)  $ and $\left(  10\right)  $, it is easy to see that%
\begin{equation}
\mathbb{E}\left\vert \Phi_{t}^{\theta}\right\vert ^{2}+\mathbb{E}%
%TCIMACRO{\dint \nolimits_{0}^{T}}%
%BeginExpansion
{\displaystyle\int\nolimits_{0}^{T}}
%EndExpansion
\left\vert \Psi_{t}^{\theta}\right\vert ^{2}dt<+\infty.
\end{equation}

Applying the Ito formula to $\left(  \Phi_{t}^{\theta}\right)  ^{2}$, we get%
\[
\mathbb{E}\left\vert \Phi_{t}^{\theta}\right\vert ^{2}+\mathbb{E}%
%TCIMACRO{\dint \nolimits_{t}^{T}}%
%BeginExpansion
{\displaystyle\int\nolimits_{t}^{T}}
%EndExpansion
\left\vert \Psi_{s}^{\theta}\right\vert ^{2}ds\leq2\mathbb{E}%
%TCIMACRO{\dint \nolimits_{t}^{T}}%
%BeginExpansion
{\displaystyle\int\nolimits_{t}^{T}}
%EndExpansion
\left\vert \Phi_{s}^{\theta}\left(  B_{y}^{\theta}\left(  s\right)  \Phi
_{s}^{\theta}+B_{z}^{\theta}\left(  s\right)  \Psi_{s}^{\theta}+\rho
_{s}^{\theta}\right)  \right\vert ds.
\]

By the Young's formula and using the fact that $B_{y}^{\theta}$ and
$B_{z}^{\theta}$ are bounded, we have for every $\varepsilon>0$%
\[
\mathbb{E}\left\vert \Phi_{t}^{\theta}\right\vert ^{2}+\mathbb{E}%
%TCIMACRO{\dint \nolimits_{t}^{T}}%
%BeginExpansion
{\displaystyle\int\nolimits_{t}^{T}}
%EndExpansion
\left\vert \Psi_{s}^{\theta}\right\vert ^{2}ds\leq\left(
%TCIMACRO{\QDOVERD{.}{.}{1}{\varepsilon}}%
%BeginExpansion
\genfrac{.}{.}{}{0}{1}{\varepsilon}%
%EndExpansion
+C\varepsilon\right)  \mathbb{E}%
%TCIMACRO{\dint \nolimits_{t}^{T}}%
%BeginExpansion
{\displaystyle\int\nolimits_{t}^{T}}
%EndExpansion
\left\vert \Phi_{s}^{\theta}\right\vert ^{2}ds+C\varepsilon\mathbb{E}%
%TCIMACRO{\dint \nolimits_{t}^{T}}%
%BeginExpansion
{\displaystyle\int\nolimits_{t}^{T}}
%EndExpansion
\left\vert \Psi_{s}^{\theta}\right\vert ^{2}ds+C\varepsilon\mathbb{E}%
%TCIMACRO{\dint \nolimits_{t}^{T}}%
%BeginExpansion
{\displaystyle\int\nolimits_{t}^{T}}
%EndExpansion
\left\vert \rho_{s}^{\theta}\right\vert ^{2}ds.
\]

Choose $\varepsilon=%
%TCIMACRO{\QDOVERD{.}{.}{1}{2C}}%
%BeginExpansion
\genfrac{.}{.}{}{0}{1}{2C}%
%EndExpansion
$, then we get%
\[
\mathbb{E}\left\vert \Phi_{t}^{\theta}\right\vert ^{2}+%
%TCIMACRO{\QDOVERD{.}{.}{1}{2}}%
%BeginExpansion
\genfrac{.}{.}{}{0}{1}{2}%
%EndExpansion
\mathbb{E}%
%TCIMACRO{\dint \nolimits_{t}^{T}}%
%BeginExpansion
{\displaystyle\int\nolimits_{t}^{T}}
%EndExpansion
\left\vert \Psi_{s}^{\theta}\right\vert ^{2}ds\leq\left(  2C+%
%TCIMACRO{\QDOVERD{.}{.}{1}{2}}%
%BeginExpansion
\genfrac{.}{.}{}{0}{1}{2}%
%EndExpansion
\right)  \mathbb{E}%
%TCIMACRO{\dint \nolimits_{t}^{T}}%
%BeginExpansion
{\displaystyle\int\nolimits_{t}^{T}}
%EndExpansion
\left\vert \Phi_{s}^{\theta}\right\vert ^{2}ds+%
%TCIMACRO{\QDOVERD{.}{.}{1}{2}}%
%BeginExpansion
\genfrac{.}{.}{}{0}{1}{2}%
%EndExpansion
\mathbb{E}%
%TCIMACRO{\dint \nolimits_{t}^{T}}%
%BeginExpansion
{\displaystyle\int\nolimits_{t}^{T}}
%EndExpansion
\left\vert \rho_{s}^{\theta}\right\vert ^{2}ds.
\]

From this above inequality, we deduce two inequalities%
\begin{equation}
\mathbb{E}\left\vert \Phi_{t}^{\theta}\right\vert ^{2}\leq\left(  2C+%
%TCIMACRO{\QDOVERD{.}{.}{1}{2}}%
%BeginExpansion
\genfrac{.}{.}{}{0}{1}{2}%
%EndExpansion
\right)  \mathbb{E}%
%TCIMACRO{\dint \nolimits_{t}^{T}}%
%BeginExpansion
{\displaystyle\int\nolimits_{t}^{T}}
%EndExpansion
\left\vert \Phi_{s}^{\theta}\right\vert ^{2}ds+%
%TCIMACRO{\QDOVERD{.}{.}{1}{2}}%
%BeginExpansion
\genfrac{.}{.}{}{0}{1}{2}%
%EndExpansion
\mathbb{E}%
%TCIMACRO{\dint \nolimits_{t}^{T}}%
%BeginExpansion
{\displaystyle\int\nolimits_{t}^{T}}
%EndExpansion
\left\vert \rho_{s}^{\theta}\right\vert ^{2}ds.
\end{equation}%
\begin{equation}
\mathbb{E}%
%TCIMACRO{\dint \nolimits_{t}^{T}}%
%BeginExpansion
{\displaystyle\int\nolimits_{t}^{T}}
%EndExpansion
\left\vert \Psi_{s}^{\theta}\right\vert ^{2}ds\leq\left(  4C+1\right)
\mathbb{E}%
%TCIMACRO{\dint \nolimits_{t}^{T}}%
%BeginExpansion
{\displaystyle\int\nolimits_{t}^{T}}
%EndExpansion
\left\vert \Phi_{s}^{\theta}\right\vert ^{2}ds+\mathbb{E}%
%TCIMACRO{\dint \nolimits_{t}^{T}}%
%BeginExpansion
{\displaystyle\int\nolimits_{t}^{T}}
%EndExpansion
\left\vert \rho_{s}^{\theta}\right\vert ^{2}ds.
\end{equation}

\ 

Let us prove now that $\underset{\theta\rightarrow0}{\lim}\mathbb{E}%
%TCIMACRO{\dint \nolimits_{t}^{T}}%
%BeginExpansion
{\displaystyle\int\nolimits_{t}^{T}}
%EndExpansion
\left\vert \rho_{s}^{\theta}\right\vert ^{2}ds=0.$

We have
\begin{align*}
\mathbb{E}%
%TCIMACRO{\dint \nolimits_{t}^{T}}%
%BeginExpansion
{\displaystyle\int\nolimits_{t}^{T}}
%EndExpansion
\left\vert \rho_{s}^{\theta}\right\vert ds  &  \leq\mathbb{E}%
%TCIMACRO{\dint \nolimits_{t}^{T}}%
%BeginExpansion
{\displaystyle\int\nolimits_{t}^{T}}
%EndExpansion%
%TCIMACRO{\dint \nolimits_{0}^{1}}%
%BeginExpansion
{\displaystyle\int\nolimits_{0}^{1}}
%EndExpansion
\left\vert b_{y}\left(  s,y_{s}^{u}+\lambda\left(  y_{s}^{\theta}-y_{s}%
^{u}\right)  ,z_{s}^{u}+\lambda\left(  z_{s}^{\theta}-z_{s}^{u}\right)
,u_{s}+\lambda\theta\left(  v_{s}-u_{s}\right)  \right)  \right. \\
&  \ \ \ \ -\left.  b_{y}\left(  s,y_{s}^{u}+\lambda\left(  y_{s}^{\theta
}-y_{s}^{u}\right)  ,z_{s}^{u},u_{s}+\lambda\theta\left(  v_{s}-u_{s}\right)
\right)  \right\vert Y_{s}d\lambda ds\\
&  +\mathbb{E}%
%TCIMACRO{\dint \nolimits_{t}^{T}}%
%BeginExpansion
{\displaystyle\int\nolimits_{t}^{T}}
%EndExpansion%
%TCIMACRO{\dint \nolimits_{0}^{1}}%
%BeginExpansion
{\displaystyle\int\nolimits_{0}^{1}}
%EndExpansion
\left\vert b_{y}\left(  s,y_{s}^{u}+\lambda\left(  y_{s}^{\theta}-y_{s}%
^{u}\right)  ,z_{s}^{u},u_{s}+\lambda\theta\left(  v_{s}-u_{s}\right)
\right)  \right. \\
&  \ \ \ \ -\left.  b_{y}\left(  s,y_{s}^{u},z_{s}^{u},u_{s}\right)
\right\vert Y_{s}d\lambda ds\\
&  +\mathbb{E}%
%TCIMACRO{\dint \nolimits_{t}^{T}}%
%BeginExpansion
{\displaystyle\int\nolimits_{t}^{T}}
%EndExpansion%
%TCIMACRO{\dint \nolimits_{0}^{1}}%
%BeginExpansion
{\displaystyle\int\nolimits_{0}^{1}}
%EndExpansion
\left\vert b_{z}\left(  s,y_{s}^{u}+\lambda\left(  y_{s}^{\theta}-y_{s}%
^{u}\right)  ,z_{s}^{u}+\lambda\left(  z_{s}^{\theta}-z_{s}^{u}\right)
,u_{s}+\lambda\theta\left(  v_{s}-u_{s}\right)  \right)  \right. \\
&  \ \ \ \ -\left.  b_{z}\left(  s,y_{s}^{u}+\lambda\left(  y_{s}^{\theta
}-y_{s}^{u}\right)  ,z_{s}^{u},u_{s}+\lambda\theta\left(  v_{s}-u_{s}\right)
\right)  \right\vert Z_{s}d\lambda ds\\
&  +\mathbb{E}%
%TCIMACRO{\dint \nolimits_{t}^{T}}%
%BeginExpansion
{\displaystyle\int\nolimits_{t}^{T}}
%EndExpansion%
%TCIMACRO{\dint \nolimits_{0}^{1}}%
%BeginExpansion
{\displaystyle\int\nolimits_{0}^{1}}
%EndExpansion
\left\vert b_{z}\left(  s,y_{s}^{u}+\lambda\left(  y_{s}^{\theta}-y_{s}%
^{u}\right)  ,z_{s}^{u},u_{s}+\lambda\theta\left(  v_{s}-u_{s}\right)
\right)  \right. \\
&  \ \ \ \ -\left.  b_{z}\left(  s,y_{s}^{u},z_{s}^{u},u_{s}\right)
\right\vert Z_{s}ds\\
&  +\mathbb{E}%
%TCIMACRO{\dint \nolimits_{t}^{T}}%
%BeginExpansion
{\displaystyle\int\nolimits_{t}^{T}}
%EndExpansion%
%TCIMACRO{\dint \nolimits_{0}^{1}}%
%BeginExpansion
{\displaystyle\int\nolimits_{0}^{1}}
%EndExpansion
\left\vert b_{v}\left(  s,y_{s}^{u}+\lambda\left(  y_{s}^{\theta}-y_{s}%
^{u}\right)  ,z_{s}^{u}+\lambda\left(  z_{s}^{\theta}-z_{s}^{u}\right)
,u_{s}+\lambda\theta\left(  v_{s}-u_{s}\right)  \right)  \right. \\
&  \ \ \ \ -\left.  b_{v}\left(  s,y_{s}^{u}+\lambda\left(  y_{s}^{\theta
}-y_{s}^{u}\right)  ,z_{s}^{u},u_{s}+\lambda\theta\left(  v_{s}-u_{s}\right)
\right)  \right\vert \left(  u_{s}-v_{s}\right)  d\lambda ds\\
&  +\mathbb{E}%
%TCIMACRO{\dint \nolimits_{t}^{T}}%
%BeginExpansion
{\displaystyle\int\nolimits_{t}^{T}}
%EndExpansion%
%TCIMACRO{\dint \nolimits_{0}^{1}}%
%BeginExpansion
{\displaystyle\int\nolimits_{0}^{1}}
%EndExpansion
\left\vert b_{v}\left(  s,y_{s}^{u}+\lambda\left(  y_{s}^{\theta}-y_{s}%
^{u}\right)  ,z_{s}^{u},u_{s}+\lambda\theta\left(  v_{s}-u_{s}\right)
\right)  \right. \\
&  \ \ \ \ -\left.  b_{v}\left(  s,y_{s}^{u},z_{s}^{u},u_{s}\right)
\right\vert \left(  u_{s}-v_{s}\right)  d\lambda ds.
\end{align*}

Applying the Cauchy-Schwarz inequality, then by using $\left(  4.6\right)  $
and $\left(  10\right)  $, we get%
\begin{align}
\mathbb{E}%
%TCIMACRO{\dint \nolimits_{t}^{T}}%
%BeginExpansion
{\displaystyle\int\nolimits_{t}^{T}}
%EndExpansion
\left\vert \rho_{s}^{\theta}\right\vert ds  &  \leq C\left(  \mathbb{E}%
%TCIMACRO{\dint \nolimits_{t}^{T}}%
%BeginExpansion
{\displaystyle\int\nolimits_{t}^{T}}
%EndExpansion
\left\vert z_{s}^{\theta}-z_{s}^{u}\right\vert ^{2}ds\right)  ^{1/2}+C\left(
\mathbb{E}%
%TCIMACRO{\dint \nolimits_{t}^{T}}%
%BeginExpansion
{\displaystyle\int\nolimits_{t}^{T}}
%EndExpansion
\left\vert z_{s}^{\theta}-z_{s}^{u}\right\vert ^{2}ds\right)  ^{1/2}%
\nonumber\\
&  +C\left(  \mathbb{E}%
%TCIMACRO{\dint \nolimits_{t}^{T}}%
%BeginExpansion
{\displaystyle\int\nolimits_{t}^{T}}
%EndExpansion%
%TCIMACRO{\dint \nolimits_{0}^{1}}%
%BeginExpansion
{\displaystyle\int\nolimits_{0}^{1}}
%EndExpansion
\left\vert b_{y}\left(  s,y_{s}^{u}+\lambda\left(  y_{s}^{\theta}-y_{s}%
^{u}\right)  ,z_{s}^{u},u_{s}+\lambda\theta\left(  v_{s}-u_{s}\right)
\right)  \right.  \right. \nonumber\\
&  \ \ \ \ -\left.  \left.  b_{y}\left(  s,y_{s}^{u},z_{s}^{u},u_{s}\right)
\right\vert ^{2}d\lambda ds\right)  ^{1/2}\\
&  +C\left(  \mathbb{E}%
%TCIMACRO{\dint \nolimits_{t}^{T}}%
%BeginExpansion
{\displaystyle\int\nolimits_{t}^{T}}
%EndExpansion%
%TCIMACRO{\dint \nolimits_{0}^{1}}%
%BeginExpansion
{\displaystyle\int\nolimits_{0}^{1}}
%EndExpansion
\left\vert b_{z}\left(  s,y_{s}^{u}+\lambda\left(  y_{s}^{\theta}-y_{s}%
^{u}\right)  ,z_{s}^{u},u_{s}+\lambda\theta\left(  v_{s}-u_{s}\right)
\right)  \right.  \right. \nonumber\\
&  \ \ \ \ -\left.  \left.  b_{z}\left(  s,y_{s}^{u},z_{s}^{u},u_{s}\right)
\right\vert ^{2}d\lambda ds\right)  ^{1/2}\nonumber\\
&  +C\left(  \mathbb{E}%
%TCIMACRO{\dint \nolimits_{t}^{T}}%
%BeginExpansion
{\displaystyle\int\nolimits_{t}^{T}}
%EndExpansion%
%TCIMACRO{\dint \nolimits_{0}^{1}}%
%BeginExpansion
{\displaystyle\int\nolimits_{0}^{1}}
%EndExpansion
\left\vert b_{v}\left(  s,y_{s}^{u}+\lambda\left(  y_{s}^{\theta}-y_{s}%
^{u}\right)  ,z_{s}^{u},u_{s}+\lambda\theta\left(  v_{s}-u_{s}\right)
\right)  \right.  \right. \nonumber\\
&  \ \ \ \ -\left.  \left.  b_{v}\left(  s,y_{s}^{u},z_{s}^{u},u_{s}\right)
\right\vert ^{2}d\lambda ds\right)  ^{1/2}.\nonumber
\end{align}

By $\left(  6\right)  $, the first and second terms in the right hand side of
the above inequality tends to $0$ as $\theta$ go to $0.$

On the other hand, since $b_{y},$\ $b_{z}$ and $b_{v}$ are continuous and
bounded, then from $\left(  6\right)  $ and the dominated convergence theorem,
we show that the third, fourth and fifth terms in the right hand side tends to
$0$ as $\theta$ go to $0.$

Then, we get%
\[
\underset{\theta\rightarrow0}{\lim}\mathbb{E}%
%TCIMACRO{\dint \nolimits_{t}^{T}}%
%BeginExpansion
{\displaystyle\int\nolimits_{t}^{T}}
%EndExpansion
\left\vert \rho_{s}^{\theta}\right\vert ds=0.
\]

Moreover, from $\left(  15\right)  $, $\left(  5\right)  $ and the fact that
$b_{y},$\ $b_{z}$ and $b_{v}$ are bounded, we show that%
\[
\mathbb{E}\left\vert \rho_{s}^{\theta}\right\vert ds<+\infty.
\]

Using the dominated convergence theorem, we have%
\[
\underset{\theta\rightarrow0}{\lim}\mathbb{E}%
%TCIMACRO{\dint \nolimits_{t}^{T}}%
%BeginExpansion
{\displaystyle\int\nolimits_{t}^{T}}
%EndExpansion
\left\vert \rho_{s}^{\theta}\right\vert ^{2}ds=0.
\]

By $\left(  13\right)  $ and Gronwall lemma, we deduce that
\[
\underset{\theta\rightarrow0}{\lim}\mathbb{E}\left\vert \Phi_{t}^{\theta
}\right\vert ^{2}=0.
\]

Finally, by $\left(  14\right)  $ we have
\[
\underset{\theta\rightarrow0}{\lim}\mathbb{E}%
%TCIMACRO{\dint \nolimits_{0}^{T}}%
%BeginExpansion
{\displaystyle\int\nolimits_{0}^{T}}
%EndExpansion
\left\vert \Psi_{t}^{\theta}\right\vert ^{2}dt=0.
\]

Lemma 5 is proved.
\end{proof}

\begin{lemma}
\textit{Let }$u$\textit{\ be an optimal control minimizing the cost }%
$J$\textit{\ over }$\mathcal{U}$\textit{\ and }$\left(  y_{t}^{u},z_{t}%
^{u}\right)  $\textit{\ the solution of }$\left(  1\right)  $%
\textit{\ controlled by }$u$\textit{. Then for any }$v\in\mathcal{U}$\textit{,
we have}
\begin{align}
0  &  \leq\mathbb{E}\left[  g_{y}\left(  y_{0}^{u}\right)  Y_{0}\right]
+\mathbb{E}%
%TCIMACRO{\dint \nolimits_{0}^{T}}%
%BeginExpansion
{\displaystyle\int\nolimits_{0}^{T}}
%EndExpansion
h_{y}\left(  t,y_{t}^{u},z_{t}^{u},u_{t}\right)  Y_{t}dt\\
&  +\mathbb{E}%
%TCIMACRO{\dint \nolimits_{0}^{T}}%
%BeginExpansion
{\displaystyle\int\nolimits_{0}^{T}}
%EndExpansion
h_{z}\left(  t,y_{t}^{u},z_{t}^{u},u_{t}\right)  Z_{t}dt+\mathbb{E}%
%TCIMACRO{\dint \nolimits_{0}^{T}}%
%BeginExpansion
{\displaystyle\int\nolimits_{0}^{T}}
%EndExpansion
h_{v}\left(  t,y_{t}^{u},z_{t}^{u},u_{t}\right)  \left(  v_{t}-u_{t}\right)
dt.\nonumber
\end{align}

\end{lemma}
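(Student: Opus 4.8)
The plan is to exploit the optimality of $u$ through the inequality $0\leq J(u^{\theta})-J(u)$, to divide by $\theta>0$, and to let $\theta\to 0$, identifying the limit of the difference quotient with the right-hand side of (16) by means of Lemmas 4 and 5. Concretely, from the definition (2) of the cost one has
\[
0\leq\frac{J(u^{\theta})-J(u)}{\theta}=\mathbb{E}\left[\frac{g(y_0^{\theta})-g(y_0^{u})}{\theta}\right]+\mathbb{E}\int_0^T\frac{h(t,y_t^{\theta},z_t^{\theta},u_t^{\theta})-h(t,y_t^{u},z_t^{u},u_t)}{\theta}\,dt ,
\]
where $(y^{\theta},z^{\theta})$ is the state associated with $u^{\theta}=u+\theta(v-u)$; both expectations are finite by Lemma 2.

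Next I would rewrite every difference quotient by the fundamental theorem of calculus, using the differentiability assumption (4.1) and the fact that $u_t^{\theta}-u_t=\theta(v_t-u_t)$. Thus the $g$-quotient equals $\big(\int_0^1 g_y(y_0^{u}+\lambda(y_0^{\theta}-y_0^{u}))\,d\lambda\big)\frac{y_0^{\theta}-y_0^{u}}{\theta}$, and the $h$-quotient equals the sum over the three variables of the terms obtained by evaluating $h_y$, $h_z$, $h_v$ at the intermediate point $(y_t^{u}+\lambda(y_t^{\theta}-y_t^{u}),\,z_t^{u}+\lambda(z_t^{\theta}-z_t^{u}),\,u_t+\lambda\theta(v_t-u_t))$, integrating in $\lambda\in[0,1]$, and multiplying respectively by $\frac{y_t^{\theta}-y_t^{u}}{\theta}$, $\frac{z_t^{\theta}-z_t^{u}}{\theta}$ and $(v_t-u_t)$.

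It then remains to pass to the limit in each of the resulting four terms, and the mechanism is uniform across them. For the $h_z$-term, writing $h_z^{\lambda,\theta}$ for $h_z$ at the intermediate point, I would estimate
\[
\mathbb{E}\int_0^T\!\!\int_0^1\left| h_z^{\lambda,\theta}\,\frac{z_t^{\theta}-z_t^{u}}{\theta}-h_z(t,y_t^{u},z_t^{u},u_t)\,Z_t\right|d\lambda\,dt\leq C\left\|\frac{z^{\theta}-z^{u}}{\theta}-Z\right\|_{M^2}+\mathbb{E}\int_0^T\!\!\int_0^1\left| h_z^{\lambda,\theta}-h_z(t,y_t^{u},z_t^{u},u_t)\right||Z_t|\,d\lambda\,dt ,
\]
using $|h_z^{\lambda,\theta}|\leq C$ from (4.2) and the Cauchy--Schwarz inequality; the first term tends to $0$ by (11) of Lemma 5. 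In the second term the integrand is dominated by $2C|Z_t|$, which is integrable since $Z\in M^2$ by (10); moreover, by Lemma 4 one has $y^{\theta}\to y^{u}$ and $z^{\theta}\to z^{u}$ in $L^2$, so along a subsequence the intermediate points converge $dt\otimes d\mathbb{P}$-almost everywhere to $(y_t^{u},z_t^{u},u_t)$, and the continuity of $h_z$ in (4.2) together with dominated convergence forces that term to $0$ as well. The $h_y$-term is treated identically (with $Y\in S^2$ in place of $Z\in M^2$, again using (11) for $\frac{y^{\theta}-y^{u}}{\theta}\to Y$), the $h_v$-term likewise (with $v-u$ bounded in $L^2$ by Definition 1), and the $g$-term follows from $\frac{y_0^{\theta}-y_0^{u}}{\theta}\to Y_0$ in $L^2$ (Lemma 5) together with the boundedness and continuity of $g_y$. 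Letting $\theta\to 0$ in the displayed optimality inequality and collecting these limits yields (16), the limit of a non-negative quantity being non-negative; the subsequence extraction is harmless because the limit is uniquely identified. The step I expect to be the main obstacle is exactly this interchange of limit and integral for the $z$/$Z$ terms: $z^{\theta}$ converges to $z^{u}$ only in $M^2$, with no pointwise control available, so the product $h_z^{\lambda,\theta}\frac{z_t^{\theta}-z_t^{u}}{\theta}$ cannot be fed directly into the dominated convergence theorem, and the splitting above -- isolating an $M^2$-norm factor governed by Lemma 5 from a bounded factor governed by a subsequence argument -- is what makes the proof close.
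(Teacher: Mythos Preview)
Your proof is correct and follows essentially the same route as the paper: start from $0\le J(u^{\theta})-J(u)$, expand each difference via the mean-value integral in $\lambda$, and pass to the limit using Lemmas~4 and~5 together with boundedness/continuity of the derivatives and dominated convergence. The only technical difference is in how the $z$-variation of $h_y,h_z,h_v$ is handled: the paper first isolates the $z$-increment via the Lipschitz hypothesis~(4.6), reducing to terms where only $(y,v)$ vary before invoking dominated convergence, whereas you bypass~(4.6) and treat all three variations at once through the subsequence\,+\,DCT argument---both devices are valid here, and your explicit justification of the subsequence step (``harmless because the limit is uniquely identified'') is exactly what the paper leaves implicit.
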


\begin{proof}
We use the same notations that in lemma 5 for $\Phi_{t}^{\theta}$ and
$\Psi_{t}^{\theta}$.

Since $u$ is optimal, we have%
\begin{align*}
0  &  \leq J\left(  u^{\theta}\right)  -J\left(  u\right) \\
&  \leq\mathbb{E}\left[  g\left(  y_{0}^{\theta}\right)  -g\left(  y_{0}%
^{u}\right)  \right]  +\mathbb{E}%
%TCIMACRO{\dint \nolimits_{0}^{T}}%
%BeginExpansion
{\displaystyle\int\nolimits_{0}^{T}}
%EndExpansion
\left[  h\left(  t,y_{t}^{\theta},z_{t}^{\theta},u_{t}^{\theta}\right)
-h\left(  t,y_{t}^{u},z_{t}^{u},u_{t}\right)  \right]  dt\\
&  \leq\mathbb{E}%
%TCIMACRO{\dint \nolimits_{0}^{1}}%
%BeginExpansion
{\displaystyle\int\nolimits_{0}^{1}}
%EndExpansion
g_{y}\left(  y_{0}^{u}+\lambda\left(  y_{0}^{\theta}-y_{0}^{u}\right)
\right)  \left(
%TCIMACRO{\QDOVERD{.}{.}{y_{0}^{\theta}-y_{0}^{u}}{\theta}}%
%BeginExpansion
\genfrac{.}{.}{}{0}{y_{0}^{\theta}-y_{0}^{u}}{\theta}%
%EndExpansion
\right)  d\lambda\\
&  +\mathbb{E}%
%TCIMACRO{\dint \nolimits_{0}^{T}}%
%BeginExpansion
{\displaystyle\int\nolimits_{0}^{T}}
%EndExpansion%
%TCIMACRO{\dint \nolimits_{0}^{1}}%
%BeginExpansion
{\displaystyle\int\nolimits_{0}^{1}}
%EndExpansion
h_{y}\left(  t,y_{t}^{u}+\lambda\left(  y_{t}^{\theta}-y_{t}^{u}\right)
,z_{t}^{u}+\lambda\left(  z_{t}^{\theta}-z_{t}^{u}\right)  ,u_{t}%
+\lambda\theta\left(  v_{t}-u_{t}\right)  \right)  \left(
%TCIMACRO{\QDOVERD{.}{.}{y_{t}^{\theta}-y_{t}^{u}}{\theta}}%
%BeginExpansion
\genfrac{.}{.}{}{0}{y_{t}^{\theta}-y_{t}^{u}}{\theta}%
%EndExpansion
\right)  d\lambda dt\\
&  +\mathbb{E}%
%TCIMACRO{\dint \nolimits_{0}^{T}}%
%BeginExpansion
{\displaystyle\int\nolimits_{0}^{T}}
%EndExpansion%
%TCIMACRO{\dint \nolimits_{0}^{1}}%
%BeginExpansion
{\displaystyle\int\nolimits_{0}^{1}}
%EndExpansion
h_{z}\left(  t,y_{t}^{u}+\lambda\left(  y_{t}^{\theta}-y_{t}^{u}\right)
,z_{t}^{u}+\lambda\left(  z_{t}^{\theta}-z_{t}^{u}\right)  ,u_{t}%
+\lambda\theta\left(  v_{t}-u_{t}\right)  \right)  \left(
%TCIMACRO{\QDOVERD{.}{.}{z_{t}^{\theta}-z_{t}^{u}}{\theta}}%
%BeginExpansion
\genfrac{.}{.}{}{0}{z_{t}^{\theta}-z_{t}^{u}}{\theta}%
%EndExpansion
\right)  d\lambda dt\\
&  +\mathbb{E}%
%TCIMACRO{\dint \nolimits_{0}^{T}}%
%BeginExpansion
{\displaystyle\int\nolimits_{0}^{T}}
%EndExpansion%
%TCIMACRO{\dint \nolimits_{0}^{1}}%
%BeginExpansion
{\displaystyle\int\nolimits_{0}^{1}}
%EndExpansion
h_{v}\left(  t,y_{t}^{u}+\lambda\left(  y_{t}^{\theta}-y_{t}^{u}\right)
,z_{t}^{u}+\lambda\left(  z_{t}^{\theta}-z_{t}^{u}\right)  ,u_{t}%
+\lambda\theta\left(  v_{t}-u_{t}\right)  \right)  \left(  v_{t}-u_{t}\right)
d\lambda dt.
\end{align*}

Then%
\begin{align}
0  &  \leq\mathbb{E}\left[  g_{y}\left(  y_{0}^{u}\right)  Y_{0}\right]
+\mathbb{E}%
%TCIMACRO{\dint \nolimits_{0}^{T}}%
%BeginExpansion
{\displaystyle\int\nolimits_{0}^{T}}
%EndExpansion
h_{y}\left(  t,y_{t}^{u},z_{t}^{u},u_{t}\right)  Y_{t}dt\nonumber\\
&  +\mathbb{E}%
%TCIMACRO{\dint \nolimits_{0}^{T}}%
%BeginExpansion
{\displaystyle\int\nolimits_{0}^{T}}
%EndExpansion
h_{z}\left(  t,y_{t}^{u},z_{t}^{u},u_{t}\right)  Z_{t}dt\\
&  +\mathbb{E}%
%TCIMACRO{\dint \nolimits_{0}^{T}}%
%BeginExpansion
{\displaystyle\int\nolimits_{0}^{T}}
%EndExpansion
h_{v}\left(  t,y_{t}^{u},z_{t}^{u},u_{t}\right)  \left(  v_{t}-u_{t}\right)
dt+\delta_{t}^{\theta}.\nonumber
\end{align}
where $\delta_{t}^{\theta}$ is given by%
\begin{align*}
\delta_{t}^{\theta}  &  =\mathbb{E}\left[  \left(  g_{y}\left(  y_{0}%
^{u}+\lambda\left(  y_{0}^{\theta}-y_{0}^{u}\right)  \right)  -g_{y}\left(
y_{0}^{u}\right)  \right)  Y_{0}\right]  -\mathbb{E}%
%TCIMACRO{\dint \nolimits_{0}^{1}}%
%BeginExpansion
{\displaystyle\int\nolimits_{0}^{1}}
%EndExpansion
g_{y}\left(  y_{0}^{u}+\lambda\left(  y_{0}^{\theta}-y_{0}^{u}\right)
\right)  \Phi_{0}^{\theta}d\lambda\\
&  -\mathbb{E}%
%TCIMACRO{\dint \nolimits_{0}^{T}}%
%BeginExpansion
{\displaystyle\int\nolimits_{0}^{T}}
%EndExpansion%
%TCIMACRO{\dint \nolimits_{0}^{1}}%
%BeginExpansion
{\displaystyle\int\nolimits_{0}^{1}}
%EndExpansion
h_{y}\left(  t,y_{t}^{u}+\lambda\left(  y_{t}^{\theta}-y_{t}^{u}\right)
,z_{t}^{u}+\lambda\left(  z_{t}^{\theta}-z_{t}^{u}\right)  ,u_{t}%
+\lambda\theta\left(  v_{t}-u_{t}\right)  \right)  \Phi_{t}^{\theta}d\lambda
dt\\
&  +\mathbb{E}%
%TCIMACRO{\dint \nolimits_{0}^{T}}%
%BeginExpansion
{\displaystyle\int\nolimits_{0}^{T}}
%EndExpansion%
%TCIMACRO{\dint \nolimits_{0}^{1}}%
%BeginExpansion
{\displaystyle\int\nolimits_{0}^{1}}
%EndExpansion
\left[  h_{y}\left(  t,y_{t}^{u}+\lambda\left(  y_{t}^{\theta}-y_{t}%
^{u}\right)  ,z_{t}^{u}+\lambda\left(  z_{t}^{\theta}-z_{t}^{u}\right)
,u_{t}+\lambda\theta\left(  v_{t}-u_{t}\right)  \right)  \right. \\
&  \ \ \ \ -\left.  h_{y}\left(  t,y_{t}^{u},z_{t}^{u},u_{t}\right)  \right]
Y_{t}d\lambda dt\\
&  -\mathbb{E}%
%TCIMACRO{\dint \nolimits_{0}^{T}}%
%BeginExpansion
{\displaystyle\int\nolimits_{0}^{T}}
%EndExpansion%
%TCIMACRO{\dint \nolimits_{0}^{1}}%
%BeginExpansion
{\displaystyle\int\nolimits_{0}^{1}}
%EndExpansion
h_{z}\left(  t,y_{t}^{u}+\lambda\left(  y_{t}^{\theta}-y_{t}^{u}\right)
,z_{t}^{u}+\lambda\left(  z_{t}^{\theta}-z_{t}^{u}\right)  ,u_{t}%
+\lambda\theta\left(  v_{t}-u_{t}\right)  \right)  \Psi_{t}^{\theta}d\lambda
dt\\
&  +\mathbb{E}%
%TCIMACRO{\dint \nolimits_{0}^{T}}%
%BeginExpansion
{\displaystyle\int\nolimits_{0}^{T}}
%EndExpansion%
%TCIMACRO{\dint \nolimits_{0}^{1}}%
%BeginExpansion
{\displaystyle\int\nolimits_{0}^{1}}
%EndExpansion
\left[  h_{z}\left(  t,y_{t}^{u}+\lambda\left(  y_{t}^{\theta}-y_{t}%
^{u}\right)  ,z_{t}^{u}+\lambda\left(  z_{t}^{\theta}-z_{t}^{u}\right)
,u_{t}+\lambda\theta\left(  v_{t}-u_{t}\right)  \right)  \right. \\
&  \ \ \ \ -\left.  h_{z}\left(  t,y_{t}^{u},z_{t}^{u},u_{t}\right)  \right]
Z_{t}d\lambda dt\\
&  +\mathbb{E}%
%TCIMACRO{\dint \nolimits_{0}^{T}}%
%BeginExpansion
{\displaystyle\int\nolimits_{0}^{T}}
%EndExpansion
\left[  h_{v}\left(  t,y_{t}^{u}+\lambda\left(  y_{t}^{\theta}-y_{t}%
^{u}\right)  ,z_{t}^{u}+\lambda\left(  z_{t}^{\theta}-z_{t}^{u}\right)
,u_{t}+\lambda\theta\left(  v_{t}-u_{t}\right)  \right)  \right. \\
&  \ \ \ \ -\left.  h_{v}\left(  t,y_{t}^{u},z_{t}^{u},u_{t}\right)  \right]
\left(  v_{t}-u_{t}\right)  d\lambda dt.
\end{align*}

Let us show that $\underset{\theta\rightarrow0}{\lim}\delta_{t}^{\theta}=0.$

We have%
\begin{align*}
\delta_{t}^{\theta}  &  =\mathbb{E}\left[  \left(  g_{y}\left(  y_{0}%
^{u}+\lambda\left(  y_{0}^{\theta}-y_{0}^{u}\right)  \right)  -g_{y}\left(
y_{0}^{u}\right)  \right)  Y_{0}\right]  -\mathbb{E}%
%TCIMACRO{\dint \nolimits_{0}^{1}}%
%BeginExpansion
{\displaystyle\int\nolimits_{0}^{1}}
%EndExpansion
g_{y}\left(  y_{0}^{u}+\lambda\left(  y_{0}^{\theta}-y_{0}^{u}\right)
\right)  \Phi_{0}^{\theta}d\lambda.\\
&  -\mathbb{E}%
%TCIMACRO{\dint \nolimits_{0}^{T}}%
%BeginExpansion
{\displaystyle\int\nolimits_{0}^{T}}
%EndExpansion%
%TCIMACRO{\dint \nolimits_{0}^{1}}%
%BeginExpansion
{\displaystyle\int\nolimits_{0}^{1}}
%EndExpansion
h_{y}\left(  t,y_{t}^{u}+\lambda\left(  y_{t}^{\theta}-y_{t}^{u}\right)
,z_{t}^{u}+\lambda\left(  z_{t}^{\theta}-z_{t}^{u}\right)  ,u_{t}%
+\lambda\theta\left(  v_{t}-u_{t}\right)  \right)  \Phi_{t}^{\theta}d\lambda
dt\\
&  +\mathbb{E}%
%TCIMACRO{\dint \nolimits_{0}^{T}}%
%BeginExpansion
{\displaystyle\int\nolimits_{0}^{T}}
%EndExpansion%
%TCIMACRO{\dint \nolimits_{0}^{1}}%
%BeginExpansion
{\displaystyle\int\nolimits_{0}^{1}}
%EndExpansion
\left[  h_{y}\left(  t,y_{t}^{u}+\lambda\left(  y_{t}^{\theta}-y_{t}%
^{u}\right)  ,z_{t}^{u}+\lambda\left(  z_{t}^{\theta}-z_{t}^{u}\right)
,u_{t}+\lambda\theta\left(  v_{t}-u_{t}\right)  \right)  \right. \\
&  \ \ \ \ -\left.  h_{y}\left(  t,y_{t}^{u}+\lambda\left(  y_{t}^{\theta
}-y_{t}^{u}\right)  ,z_{t}^{u},u_{t}+\lambda\theta\left(  v_{t}-u_{t}\right)
\right)  \right]  Y_{t}d\lambda dt\\
&  +\mathbb{E}%
%TCIMACRO{\dint \nolimits_{0}^{T}}%
%BeginExpansion
{\displaystyle\int\nolimits_{0}^{T}}
%EndExpansion%
%TCIMACRO{\dint \nolimits_{0}^{1}}%
%BeginExpansion
{\displaystyle\int\nolimits_{0}^{1}}
%EndExpansion
\left[  h_{y}\left(  t,y_{t}^{u}+\lambda\left(  y_{t}^{\theta}-y_{t}%
^{u}\right)  ,z_{t}^{u},u_{t}+\lambda\theta\left(  v_{t}-u_{t}\right)
\right)  -h_{y}\left(  t,y_{t}^{u},z_{t}^{u},u_{t}\right)  \right]
Y_{t}d\lambda dt\\
&  -\mathbb{E}%
%TCIMACRO{\dint \nolimits_{0}^{T}}%
%BeginExpansion
{\displaystyle\int\nolimits_{0}^{T}}
%EndExpansion%
%TCIMACRO{\dint \nolimits_{0}^{1}}%
%BeginExpansion
{\displaystyle\int\nolimits_{0}^{1}}
%EndExpansion
h_{z}\left(  t,y_{t}^{u}+\lambda\left(  y_{t}^{\theta}-y_{t}^{u}\right)
,z_{t}^{u}+\lambda\left(  z_{t}^{\theta}-z_{t}^{u}\right)  ,u_{t}%
+\lambda\theta\left(  v_{t}-u_{t}\right)  \right)  \Psi_{t}^{\theta}d\lambda
dt\\
&  +\mathbb{E}%
%TCIMACRO{\dint \nolimits_{0}^{T}}%
%BeginExpansion
{\displaystyle\int\nolimits_{0}^{T}}
%EndExpansion%
%TCIMACRO{\dint \nolimits_{0}^{1}}%
%BeginExpansion
{\displaystyle\int\nolimits_{0}^{1}}
%EndExpansion
\left[  h_{z}\left(  t,y_{t}^{u}+\lambda\left(  y_{t}^{\theta}-y_{t}%
^{u}\right)  ,z_{t}^{u}+\lambda\left(  z_{t}^{\theta}-z_{t}^{u}\right)
,u_{t}+\lambda\theta\left(  v_{t}-u_{t}\right)  \right)  \right. \\
&  \ \ \ \ -\left.  h_{z}\left(  t,y_{t}^{u}+\lambda\left(  y_{t}^{\theta
}-y_{t}^{u}\right)  ,z_{t}^{u},u_{t}+\lambda\theta\left(  v_{t}-u_{t}\right)
\right)  \right]  Z_{t}d\lambda dt\\
&  +\mathbb{E}%
%TCIMACRO{\dint \nolimits_{0}^{T}}%
%BeginExpansion
{\displaystyle\int\nolimits_{0}^{T}}
%EndExpansion%
%TCIMACRO{\dint \nolimits_{0}^{1}}%
%BeginExpansion
{\displaystyle\int\nolimits_{0}^{1}}
%EndExpansion
\left[  h_{z}\left(  t,y_{t}^{u}+\lambda\left(  y_{t}^{\theta}-y_{t}%
^{u}\right)  ,z_{t}^{u},u_{t}+\lambda\theta\left(  v_{t}-u_{t}\right)
\right)  -h_{z}\left(  t,y_{t}^{u},z_{t}^{u},u_{t}\right)  \right]
Z_{t}d\lambda dt\\
&  +\mathbb{E}%
%TCIMACRO{\dint \nolimits_{0}^{T}}%
%BeginExpansion
{\displaystyle\int\nolimits_{0}^{T}}
%EndExpansion%
%TCIMACRO{\dint \nolimits_{0}^{1}}%
%BeginExpansion
{\displaystyle\int\nolimits_{0}^{1}}
%EndExpansion
\left[  h_{v}\left(  t,y_{t}^{u}+\lambda\left(  y_{t}^{\theta}-y_{t}%
^{u}\right)  ,z_{t}^{u}+\lambda\left(  z_{t}^{\theta}-z_{t}^{u}\right)
,u_{t}+\lambda\theta\left(  v_{t}-u_{t}\right)  \right)  \right. \\
&  \ \ \ \ -\left.  h_{v}\left(  t,y_{t}^{u}+\lambda\left(  y_{t}^{\theta
}-y_{t}^{u}\right)  ,z_{t}^{u},u_{t}+\lambda\theta\left(  v_{t}-u_{t}\right)
\right)  \right]  \left(  v_{t}-u_{t}\right)  d\lambda dt\\
&  +\mathbb{E}%
%TCIMACRO{\dint \nolimits_{0}^{T}}%
%BeginExpansion
{\displaystyle\int\nolimits_{0}^{T}}
%EndExpansion%
%TCIMACRO{\dint \nolimits_{0}^{1}}%
%BeginExpansion
{\displaystyle\int\nolimits_{0}^{1}}
%EndExpansion
\left[  h_{v}\left(  t,y_{t}^{u}+\lambda\left(  y_{t}^{\theta}-y_{t}%
^{u}\right)  ,z_{t}^{u},u_{t}+\lambda\theta\left(  v_{t}-u_{t}\right)
\right)  \right. \\
&  \ \ \ \ -\left.  h_{v}\left(  t,y_{t}^{u},z_{t}^{u},u_{t}\right)  \right]
\left(  v_{t}-u_{t}\right)  d\lambda dt
\end{align*}

Applying the Cauchy-Schwarz inequality, then by using $\left(  10\right)
,\ \left(  4.6\right)  ,$ definition 1 and the fact that $g_{y},h_{y},h_{z}$
are bounded, we get%
\begin{align*}
\left\vert \delta_{t}^{\theta}\right\vert  &  \leq C\left(  \mathbb{E}%
\left\vert \Phi_{0}^{\theta}\right\vert ^{2}\right)  ^{1/2}+C\left(
\mathbb{E}%
%TCIMACRO{\dint \nolimits_{0}^{T}}%
%BeginExpansion
{\displaystyle\int\nolimits_{0}^{T}}
%EndExpansion
\left\vert \Phi_{t}^{\theta}\right\vert ^{2}dt\right)  ^{1/2}+C\left(
\mathbb{E}%
%TCIMACRO{\dint \nolimits_{0}^{T}}%
%BeginExpansion
{\displaystyle\int\nolimits_{0}^{T}}
%EndExpansion
\left\vert \Psi_{t}^{\theta}\right\vert ^{2}dt\right)  ^{1/2}\\
&  +C\left(
%TCIMACRO{\dint \nolimits_{0}^{T}}%
%BeginExpansion
{\displaystyle\int\nolimits_{0}^{T}}
%EndExpansion
\mathbb{E}\left\vert z_{t}^{\theta}-z_{t}^{u}\right\vert ^{2}dt\right)
^{1/2}+C\left(  \mathbb{E}\left\vert g_{y}\left(  y_{0}^{u}+\lambda\left(
y_{0}^{\theta}-y_{0}^{u}\right)  \right)  -g_{y}\left(  y_{0}^{u}\right)
\right\vert ^{2}\right)  ^{1/2}\\
&  +C\left(  \mathbb{E}%
%TCIMACRO{\dint \nolimits_{0}^{T}}%
%BeginExpansion
{\displaystyle\int\nolimits_{0}^{T}}
%EndExpansion%
%TCIMACRO{\dint \nolimits_{0}^{1}}%
%BeginExpansion
{\displaystyle\int\nolimits_{0}^{1}}
%EndExpansion
\left\vert h_{y}\left(  t,y_{t}^{u}+\lambda\left(  y_{t}^{\theta}-y_{t}%
^{u}\right)  ,z_{t}^{u},u_{t}+\lambda\theta\left(  v_{t}-u_{t}\right)
\right)  \right.  \right. \\
&  \ \ \ \ -\left.  \left.  h_{y}\left(  t,y_{t}^{u},z_{t}^{u},u_{t}\right)
\right\vert ^{2}d\lambda dt\right)  ^{1/2}\\
&  +C\left(  \mathbb{E}%
%TCIMACRO{\dint \nolimits_{0}^{T}}%
%BeginExpansion
{\displaystyle\int\nolimits_{0}^{T}}
%EndExpansion%
%TCIMACRO{\dint \nolimits_{0}^{1}}%
%BeginExpansion
{\displaystyle\int\nolimits_{0}^{1}}
%EndExpansion
\left\vert h_{z}\left(  t,y_{t}^{u}+\lambda\left(  y_{t}^{\theta}-y_{t}%
^{u}\right)  ,z_{t}^{u},u_{t}+\lambda\theta\left(  v_{t}-u_{t}\right)
\right)  \right.  \right. \\
&  \ \ \ \ -\left.  \left.  h_{z}\left(  t,y_{t}^{u},z_{t}^{u},u_{t}\right)
\right\vert ^{2}d\lambda dt\right)  ^{1/2}\\
&  +C\left(  \mathbb{E}%
%TCIMACRO{\dint \nolimits_{0}^{T}}%
%BeginExpansion
{\displaystyle\int\nolimits_{0}^{T}}
%EndExpansion%
%TCIMACRO{\dint \nolimits_{0}^{1}}%
%BeginExpansion
{\displaystyle\int\nolimits_{0}^{1}}
%EndExpansion
\left\vert h_{v}\left(  t,y_{t}^{u}+\lambda\left(  y_{t}^{\theta}-y_{t}%
^{u}\right)  ,z_{t}^{u},u_{t}+\lambda\theta\left(  v_{t}-u_{t}\right)
\right)  \right.  \right. \\
&  \ \ \ \ -\left.  \left.  h_{v}\left(  t,y_{t}^{u},z_{t}^{u},u_{t}\right)
\right\vert ^{2}d\lambda dt\right)  ^{1/2}.
\end{align*}

By $\left(  6\right)  $ and $\left(  11\right)  $, the first, second, third
and fourth terms in the right hand side of the above inequality tend to $0$ as
$\theta$ go to $0$.

On the other hand, since $g_{y},h_{y},h_{z}$ and $h_{v}$ are continous and
bounded, then by $\left(  6\right)  $ and the dominated convergence theorem,
the fifth, sixth, seventh and eighth terms in the right hand side tend to $0$
as $\theta$ to $0.$

Consequently, $\underset{\theta\rightarrow0}{\lim}\delta_{t}^{\theta}=0$ and
by letting $\theta$ go to $0$ in $\left(  17\right)  $, the proof is completed.
\end{proof}

\section{Necessary optimality conditions}

Starting from the variational inequality $\left(  16\right)  $, we can now
stated and prove our main result in thnis paper, the necessary optimality conditions.

\begin{theorem}
(Necessary optimality conditions). \textit{Let }$\left(  u,y^{u},z^{u}\right)
$\textit{\ be an optimal solution of the control problem }$\left\{  \left(
1\right)  ,\left(  2\right)  ,\left(  3\right)  \right\}  $\textit{. Then,
there exists a unique adapted process}%
\[
p^{u}\in L_{\mathcal{F}}^{2}\left(  \left[  0,T\right]  ;\mathbb{R}%
^{n}\right)  ,
\]
\textit{which is solution of the following forward stochastic differential
equation (called adjoint equation)}%
\begin{equation}
\left\{
\begin{array}
[c]{l}%
-dp_{t}^{u}=H_{y}\left(  t,y_{t}^{u},z_{t}^{u},u_{t},p_{t}^{u}\right)
dt+H_{z}\left(  t,y_{t}^{u},z_{t}^{u},u_{t},p_{t}^{u}\right)  dW_{t},\\
p_{0}^{u}=g_{y}\left(  y_{0}^{u}\right)  ,
\end{array}
\right.
\end{equation}
such that for every $v\in\mathcal{U}$%
\begin{equation}
H_{v}\left(  t,y_{t}^{u},z_{t}^{u},u_{t},p_{t}^{u}\right)  \left(  u_{t}%
-v_{t}\right)  \geq0\ ,\ as\ ,\ ae,
\end{equation}
where the Hamiltonian $H$ is defined from $\left[  0,T\right]  \times
\mathbb{R}^{n}\times\mathcal{M}_{n\times d}\left(  \mathbb{R}\right)  \times
U\times\mathbb{R}^{n}$ into $\mathbb{R}$ by%
\[
H\left(  t,y,z,v,p\right)  =pb\left(  t,y,z,v\right)  -h\left(
t,y,z,v\right)  .
\]

\end{theorem}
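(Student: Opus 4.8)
The plan is to convert the variational inequality from Lemma~6 into the stated pointwise condition by introducing the adjoint process and using a duality (integration by parts) argument. First I would define the adjoint equation: since the forward process $Y$ satisfies a linear BSDE with terminal value $0$ and the Hamiltonian is $H(t,y,z,v,p)=pb(t,y,z,v)-h(t,y,z,v)$, I would take $p^u$ to be the solution of the \emph{forward} linear SDE
\[
\left\{
\begin{array}[c]{l}
-dp_t^u = H_y\!\left(t,y_t^u,z_t^u,u_t,p_t^u\right)dt + H_z\!\left(t,y_t^u,z_t^u,u_t,p_t^u\right)dW_t,\\
p_0^u = g_y\!\left(y_0^u\right),
\end{array}
\right.
\]
which, written out, reads $-dp_t^u=\bigl[b_y^{*}p_t^u-h_y\bigr]dt+\bigl[b_z^{*}p_t^u-h_z\bigr]dW_t$ with the derivatives evaluated along the optimal trajectory. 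Because $b_y,b_z,h_y,h_z$ are uniformly bounded (assumption (4.2)) and $g_y(y_0^u)$ together with $\int_0^T|h_y|dt$ are square-integrable (again by (4.2) and the fact, via Lemma~3 and the auxiliary $(n+1)$-dimensional reduction, that the relevant processes lie in $L^2$), standard linear SDE theory gives existence and uniqueness of $p^u\in L_{\mathcal F}^2([0,T];\mathbb R^n)$; this settles the first assertion.

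Next I would apply the It\^o product rule to $p_t^u Y_t$ on $[0,T]$, where $(Y,Z)$ is the solution of the linear BSDE~$(9)$ from Lemma~5. Using $Y_T=0$ and $p_0^u=g_y(y_0^u)$, the boundary terms collapse to $-\mathbb E\bigl[g_y(y_0^u)Y_0\bigr]$, while the $dt$-terms produce, after cancelling the symmetric contributions $p_t^u b_y Y_t + (b_y^{*}p_t^u)Y_t$ and $p_t^u b_z Z_t + (b_z^{*}p_t^u)Z_t$ against each other, exactly
\[
\mathbb E\bigl[g_y(y_0^u)Y_0\bigr]
= \mathbb E\!\int_0^T p_t^u\, b_v\!\left(t,y_t^u,z_t^u,u_t\right)(v_t-u_t)\,dt
- \mathbb E\!\int_0^T\bigl[h_y Y_t + h_z Z_t\bigr]dt.
\]
The cross-variation term $\mathbb E\int_0^T Z_t H_z\,dt$ also appears and is matched; the stochastic integrals vanish in expectation because $p^u Z$ and $Y H_z$ are in $L^1$ (Cauchy--Schwarz with the $L^2$ bounds~$(5)$, $(10)$, and the $L^2$-bound on $p^u$). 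Substituting this identity into the variational inequality~$(16)$, the terms $\mathbb E\int_0^T h_y Y_t\,dt$ and $\mathbb E\int_0^T h_z Z_t\,dt$ are eliminated and one is left with
\[
0 \le \mathbb E\!\int_0^T\Bigl[p_t^u\, b_v - h_v\Bigr]\!\left(t,y_t^u,z_t^u,u_t\right)(v_t-u_t)\,dt
= \mathbb E\!\int_0^T H_v\!\left(t,y_t^u,z_t^u,u_t,p_t^u\right)(v_t-u_t)\,dt.
\]

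Finally I would pass from this integrated inequality to the pointwise statement~$(19)$ by the usual measurable-selection / localization argument: for any $v\in\mathcal U$ and any $A\in\mathcal F_t$, the control $w_s=v_s\mathbf 1_A\mathbf 1_{[t,t+\varepsilon]}(s)+u_s$ elsewhere is admissible (since $U$ is convex and $u,v$ are square-integrable), so applying the inequality to $w$, dividing by $\varepsilon$ and letting $\varepsilon\downarrow0$ (Lebesgue differentiation) yields $\mathbb E\bigl[H_v(t,y_t^u,z_t^u,u_t,p_t^u)(v_t-u_t)\mathbf 1_A\bigr]\le 0$ for all $A$, hence $H_v(t,\cdot)(u_t-v_t)\ge 0$ $ds\otimes d\mathcal P$-a.e. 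Since $U$ is convex, varying $v$ over a countable dense subset of $U$ and recombining gives the condition simultaneously for all $v\in U$. The main obstacle is the It\^o product-rule computation and, more delicately, justifying that all the stochastic integrals have zero expectation and that every $dt$-integrand is genuinely integrable: here one must lean on Lemma~3 / Lemma~5 (which guarantee $Y,Z\in L^2$ despite the ambient $L^1$ setting) and on the uniform boundedness of all first derivatives, so that products like $p^u b_v(v-u)$ are controlled by Cauchy--Schwarz; once integrability is in place, the cancellations are purely algebraic and routine.
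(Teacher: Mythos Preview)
Your plan is essentially the paper's own proof: invoke Lemma~6, apply It\^o's formula to $p_t^uY_t$ (using $Y_T=0$ and $p_0^u=g_y(y_0^u)$), check that the stochastic integrals are true martingales via the $L^2$ bounds on $p^u$ and $(Y,Z)$, and read off the integrated inequality $0\le\mathbb{E}\int_0^T H_v(u_t-v_t)\,dt$. One small slip: in your displayed identity for $\mathbb{E}[g_y(y_0^u)Y_0]$ the term $\mathbb{E}\int p_t^u b_v(v_t-u_t)\,dt$ should carry a minus sign (the cross-variation contributes $-H_zZ\,dt$, and the bookkeeping then gives $0\le\mathbb{E}\int H_v(u_t-v_t)\,dt$, not $(v_t-u_t)$); your localization paragraph silently corrects this, so the conclusion is right, but the intermediate line is inconsistent. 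The paper, incidentally, stops at the integrated inequality and declares that the pointwise statement ``follows immediately'', so your explicit localization argument is a welcome addition rather than a deviation.
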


\begin{proof}
Since\ $p_{0}^{u}=g_{y}\left(  y_{0}^{u}\right)  $, then by the variational
inequality $\left(  16\right)  $, we have%
\begin{align}
0  &  \leq\mathbb{E}\left[  p_{0}^{u}Y_{0}\right]  +\mathbb{E}%
%TCIMACRO{\dint \nolimits_{0}^{T}}%
%BeginExpansion
{\displaystyle\int\nolimits_{0}^{T}}
%EndExpansion
h_{y}\left(  t,y_{t}^{u},z_{t}^{u},u_{t}\right)  Y_{t}dt\\
&  +\mathbb{E}%
%TCIMACRO{\dint \nolimits_{0}^{T}}%
%BeginExpansion
{\displaystyle\int\nolimits_{0}^{T}}
%EndExpansion
h_{z}\left(  t,y_{t}^{u},z_{t}^{u},u_{t}\right)  Z_{t}dt+\mathbb{E}%
%TCIMACRO{\dint \nolimits_{0}^{T}}%
%BeginExpansion
{\displaystyle\int\nolimits_{0}^{T}}
%EndExpansion
h_{v}\left(  t,y_{t}^{u},z_{t}^{u},u_{t}\right)  \left(  v_{t}-u_{t}\right)
dt.\nonumber
\end{align}
where $\left(  Y,Z\right)  $ is the solution of $\left(  9\right)  $.

Applying the Ito formula to $p_{t}^{u}Y_{t}$, we get%
\begin{align*}
\mathbb{E}\left[  p_{0}^{u}Y_{0}\right]   &  =-\mathbb{E}%
%TCIMACRO{\dint \nolimits_{0}^{T}}%
%BeginExpansion
{\displaystyle\int\nolimits_{0}^{T}}
%EndExpansion
h_{y}\left(  t,y_{t}^{u},z_{t}^{u},u_{t}\right)  Y_{t}dt-\mathbb{E}%
%TCIMACRO{\dint \nolimits_{0}^{T}}%
%BeginExpansion
{\displaystyle\int\nolimits_{0}^{T}}
%EndExpansion
p_{t}^{u}b_{v}\left(  t,y_{t}^{u},z_{t}^{u},u_{t}\right)  \left(  v_{t}%
-u_{t}\right)  dt\\
&  -\mathbb{E}%
%TCIMACRO{\dint \nolimits_{0}^{T}}%
%BeginExpansion
{\displaystyle\int\nolimits_{0}^{T}}
%EndExpansion
h_{z}\left(  t,y_{t}^{u},z_{t}^{u},u_{t}\right)  Z_{t}dt+\mathbb{E}\left[
S_{T}\right]  .
\end{align*}
where $S_{T}$ is given by%

\[
S_{T}=%
%TCIMACRO{\dint \nolimits_{0}^{T}}%
%BeginExpansion
{\displaystyle\int\nolimits_{0}^{T}}
%EndExpansion
\left[  H_{z}\left(  t,y_{t}^{u},z_{t}^{u},u_{t},p_{t}^{u}\right)  Y-p_{t}%
^{u}Z_{t}\right]  dW_{t}.
\]

By replaces $\mathbb{E}\left[  p_{0}^{u}Y_{0}\right]  $ by it's value in
$\left(  20\right)  $, we have%
\begin{equation}
0\leq\mathbb{E}%
%TCIMACRO{\dint \nolimits_{0}^{T}}%
%BeginExpansion
{\displaystyle\int\nolimits_{0}^{T}}
%EndExpansion
H_{v}\left(  t,y_{t}^{u},z_{t}^{u},u_{t}\right)  \left(  u_{t}-v_{t}\right)
dt+\mathbb{E}\left[  S_{T}\right]  .
\end{equation}

The adjoint equation $\left(  18\right)  $ is a linear forward stochastic
differential equation with bounded coefficients and bounded initial condition,
then it admits a unique adapted solution$\ p^{u}$ such that
\begin{equation}
\mathbb{E}\left[  \underset{t\in\left[  0,T\right]  }{\sup}\left\vert
p_{t}^{u}\right\vert ^{2}\right]  <+\infty.
\end{equation}

By the Cauchy-Schwarz inequality, and using $\left(  10\right)  $, $\left(
22\right)  $, the fact that $b_{z}$, $h_{z}$ are bounded and the dominated
convergence theorem, we show that $S$ is a $L^{2}$-martingale.

Hence, $\mathbb{E}\left[  S_{T}\right]  =0$ and the result follows immediately
from $\left(  21\right)  .$
\end{proof}

\end{document}